\newtheorem{theorem}{Theorem}
\newtheorem{corollary}[theorem]{Corollary}
\newtheorem{definition}[theorem]{Definition}
\newtheorem{lemma}[theorem]{Lemma}
\newenvironment{proof}[1][Proof]{\textbf{#1.} }{\ \rule{0.5em}{0.5em}}
\begin{document}

\title{Energy dissipation and self-similar solutions for an unforced inviscid dyadic model}
\author{D. Barbato, F. Flandoli, F. Morandin}
\maketitle
\begin{abstract}
A shell-type model of an inviscid fluid, previously considered in the
literature, is investigated in absence of external force. Energy dissipation
of positive solutions is proved and decay of energy like $t^{-2}$ is
established. Self-similar decaying positive solutions are introduced and
proved to exist and classified. Coalescence and blow-up are obtained as a
consequence, in the class of arbitrary sign solutions.
\end{abstract}

\section{Introduction}

The following system of differential equations{\
\begin{equation}
\left\{
\begin{array}
[c]{ll}%
X_{0}(t)=0 & \qquad\forall t\geq0\\
\dot{X_{n}}(t)=k_{n-1}X_{n-1}^{2}(t)-k_{n}X_{n}(t)X_{n+1}(t) & \qquad\forall
t\geq0,\quad\forall n\geq1
\end{array}
\right.  \label{model}%
\end{equation}
where }$k_{n}=2^{n}$,{\ }has been introduced as a simplified model of 3D Euler
evolution in order to investigate a number of properties which are out of
reach at present for more realistic models of fluid dynamics. Let us mention
in particular the works \cite{FriPav} \cite{KatPav}, \cite{KiZlat},
\cite{CFP1}, \cite{Ces}, \cite{CFP2} and references therein, devoted to this
model and variants of it. This model differs from other dyadic or shell
models, the analysis of which is more difficult and less explicit, see among
many others \cite{biferale}, \cite{Ga}, \cite{BLPTiti},
\cite{constantin-titi3}, \cite{BBBF};\ system~\eqref{model} has a basic
monotonicity property that makes it more tractable.

Among the many interesting properties proved in the above mentioned
works, let us recall: i) the dissipation of energy, in spite of the
fact that formally the equation is conservative; ii) the blow-up of
solutions in certain topologies, although the same solutions are
global in larger spaces; iii) the relation with Kolmogorov (K41)
scaling law. Our aim is to add some contribution to the understanding
of these problems. The basic difference between the previous works
dealing with energy dissipation and the present one is that a non-zero
force was imposed until now, while we investigate the case without
force, which contains a number of new phenomena.

About energy dissipation, the results known until now have the following
form. A constant positive force $f>0$ is added to the first mode
\[
\dot{X_{1}}(t)=-k_{1}X_{1}(t)X_{2}(t)+f
\]
and it is proved that there exists a unique fixed point, having Kolmogorov
scaling, which attracts \textit{exponentially} all positive solutions
(solutions with non negative components), in the topology $l^{2}$. This
implies that energy is dissipated. Positive solutions are better motivated in
comparison with fluid dynamic equations (see \cite{CFP1}).

Although the case $f>0$ and the fixed point are very interesting, it
is also of interest to analyze the free (\textit{unforced}) dynamic,
namely system~\eqref{model} without any forcing term. Physically, if
we accept that a dyadic model like~\eqref{model} may describe
something of turbulence, the unforced case would correspond to free
decaying turbulence, a widely observed phenomenon, see \cite{Fri} and
references therein. The results of energy dissipation from the
previous papers do not extend to this case, they really require $f>0$
and make use of the non-trivial fixed point in the computations. For
simplicity, one would conjecture the case $f=0$ to be similar
(exponential decay to zero), maybe with a different proof, but it
turns out this is not the case.

For the unforced case, namely system~\eqref{model}, we first prove the
following result: all finite energy positive solutions have energy
which decays to zero as $t\rightarrow\infty$. Nevertheless, for
sufficiently regular initial conditions, energy is conserved for small
times, as was shown for instance in \cite{FriPav}, \cite{KatPav} and
\cite{Wal}.

Even a large class of solutions starting with infinite energy
immediately enter $l^{2}$, namely they immediately get finite energy,
and then continue their process of dissipation to zero.

However, the decay of energy to zero is no more exponential. By means of a
scaling argument we prove an upper bound on the decay of the energy of the
order $t^{-2}$. Then we prove a weak form of lower bound of the same order,
which includes in particular the property $\int_{0}^{\infty}X_{n}\left(
t\right)  dt=\infty$ for all $n$ greater than some $n_{0}$. Thus exponential
decay is ruled out.

Then we investigate positive \textit{self-similar} solutions, of the form
\begin{equation}
X_{n}\left(  t\right)  =\frac{a_{n}}{t-t_{0}},\quad t\geq t_{0}%
.\label{forma delle self similar}%
\end{equation}
\noindent\quad\noindent\noindent We prove the existence of such solutions in
the space $l^{2}$. The proof is highly non-trivial. The energy of these
solutions decays exactly as $t^{-2}$ and we conjecture that this should be the
case for all positive solutions.

The existence of such self-similar solutions relates also to the problem of
blow-up and coalescence, but in a rather controversial way from the viewpoint
of the physical interest. If we decide that only solutions with positive
components have physical meaning, our self-similar solutions do not contribute
to the problem of blow-up. If on the contrary we consider system~\eqref{model}
as a nonlinear model to be understood for any kind of initial condition, we
have proved (by a simple inversion of time) that there exist solutions defined
on the time interval $\left(  -\infty,t_{0}\right)  $ of the form
\[
X_{n}\left(  t\right)  =\frac{a_{n}}{t-t_{0}},\quad t\leq t_{0}%
\]
with $\left(  a_{n}\right)  \in l^{2}$. The components $X_{n}\left(  t\right)
$ are all negative. These solutions blow up in finite time,
\textit{irrespective of the topology}, in the sense that all components blow
up in finite time. This result is much stronger than the blow-up results of
the previous literature on dyadic models. 

The existence of positive self-similar solutions also implies coalescence: we
prove that at every point of a self-similar solution there is the coalescence
of at least another solution (which cannot be positive). 

Let us finally mention other properties of the self-similar solutions we
construct and some open problems.

Given $t_{0}$, we prove that there is a \textit{unique} solution of the 
form~\eqref{forma delle self similar} in $l^{2}$, with
\textit{strictly positive} components. We also prove that the
components $a_{n}$ decay as
\[
a_{n}\sim C_02^{-n/3}%
\]
so, in a sense, these are `Kolmogorov type decaying solutions' \cite{K41}.
There are two degrees of freedom, however, in these solutions. One is the
given value of $t_{0}$. The other is that we could take
\[
a_{1}=...=a_{n_{0}}=0
\]
and prove that there exists a unique (for given $t_{0}$) self-similar solution
with these first components equal to zero and all the others strictly
positive. In this case, closer inspection gives us
\[
a_{n}\sim C_{n_{0}}2^{-n/3}%
\]
where
\[
C_{n_{0}}=2^{-2n_{0}/3}R^{-1},\qquad\qquad R\in(4/5,1)
\]
(numerical experiments give us $R\approx0.885765931$).

Thus the picture is that in $l^{2}$ travel a family of self-similar solutions
depending on the continuous parameter $t_{0}$ and the discrete parameter
$n_{0}$.

What happens to all other solutions? Do they approach this set of special
solutions? In this case, does a generic solution select one particular
self-similar solution and get closer and closer to it, or does it slowly shift
from one self-similar solution to the other? A sufficiently fast convergence
to one self-similar solution would imply that Kolmogorov K41 scaling holds
true for this simple dyadic model. But a slow convergence or a shift along
different self-similar solutions could produce small deviations from
Kolmogorov scaling. Further research is necessary to clarify these issues.

\section{Concepts of solution, their existence, positivity and energy inequality}

To start, let us state rather general existence theorems. Their proofs are
classical, compared to the previous literature, but at least the case of
infinite initial energy is new, so we give the details in the Appendix. We
denote by $H$ the space $l^{2}$, namely the space of all square integrable
sequences of real numbers. It is a Hilbert space with the obvious inner
product $\left\langle u,v\right\rangle _{H}=\sum_{n=1}^{\infty}u_{n}v_{n}$,
where $u=\left(  u_{n}\right)  _{n\in\mathbb{N}}$, $v=\left(  v_{n}\right)
_{n\in\mathbb{N}}$, $u,v\in H$. The corresponding norm in $H$ will be denoted
by $\left|  \cdot\right|  _{H}$. We denote the space of all sequences of real
numbers by $\mathbb{R}^{\mathbb{N}}$, and its subset of all non-negative
sequences by $\mathbb{R}_{+}^{\mathbb{N}}$.

In the sequel we shall often use the term ``energy'' for the quantity $\left|
X\right|  _{H}^{2}$, for an element $X\in H$ or also $X\in\mathbb{R}%
^{\mathbb{N}}$ (it may have infinite energy).

\begin{definition}
\label{def1}Given $X^{0}\in\mathbb{R}^{\mathbb{N}}$, we call componentwise
solution of system~\eqref{model} with initial condition $X^{0}$ any sequence
$X=\left(  X_{n}\left(  \cdot\right)  \right)  _{n\in\mathbb{N}}$ of
continuously differentiable functions $X_{n}\left(  \cdot\right)
:[0,\infty)\rightarrow\mathbb{R}$ such that $X_{n}\left(  0\right)  =X_{n}%
^{0}$ for all $n\in\mathbb{N}$ and all equations in system~\eqref{model} are
satisfied. If $X\left(  t\right)  \in$ $\mathbb{R}_{+}^{\mathbb{N}}$ for all
$t\geq0$, we call it a positive componentwise solution.
\end{definition}

If $X$ is a componentwise solution, from system~\eqref{model} we have
\begin{equation}
X_{n}(t)=X_{n}^{0}e^{-k_{n}\int_{0}^{t}X_{n+1}(r)dr}+\int_{0}^{t}e^{-k_{n}%
\int_{s}^{t}X_{n+1}(r)dr}k_{n-1}X_{n-1}^{2}\left(  s\right)  ds.
\label{variation of constants}%
\end{equation}
This identity will be used quite often.

\begin{definition}
\label{def2}We say that a componentwise solution $X$ has finite energy for
positive times if $X\left(  t\right)  \in H$ for all $t>0$. If also $X^{0}\in
H$, we call $X$ a finite energy solution.
\end{definition}

\begin{theorem}
\label{teo existence 1}Given $X^{0}\in\mathbb{R}_{+}^{\mathbb{N}}$, any
componentwise solution of system~\eqref{model} with initial condition $X^{0}$
is positive. At least one such solution exists. Moreover, any such solution
has the following properties:
\begin{enumerate}[i)]
\item for every $n\geq1$ and $t\geq0$ we have
\[
\frac d{dt}\sum_{j=1}^{n} X^2_{j}\left(  t\right)
=-k_nX_n^2(t)X_{n+1}(t)
\]
and hence
\begin{equation}\label{phi monotona}
\sum_{j=1}^{n} X^2_{j}\left(  t\right) \leq\sum_{j=1}%
^{n}\left(  X_{j}^{0}\right)  ^{2}%\label{energy decreasing}%
\end{equation}

\item if $X_{n}^{0}>0$ for some $n\geq1$, then $X_{m}(t)>0$ for all $m\geq n$
and all $t>0$.
\end{enumerate}
\end{theorem}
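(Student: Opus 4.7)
The statement bundles four assertions: positivity of an arbitrary componentwise solution with non-negative initial data, existence of at least one such solution, the partial-energy identity in i), and the strict-propagation statement ii). I would handle them in that order, since positivity feeds into every subsequent step; the one technical ingredient, a priori estimates uniform in the truncation parameter when the initial energy is infinite, lives in the existence step.

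\textbf{Positivity.} Given any componentwise solution $X$ with $X^{0}\in\mathbb{R}_{+}^{\mathbb{N}}$, I would read off positivity directly from the variation-of-constants formula~\eqref{variation of constants}. Its right-hand side is the sum of two manifestly non-negative terms: $X_{n}^{0}\geq 0$ times a strictly positive exponential, plus an integral of $k_{n-1}X_{n-1}^{2}(s)\geq 0$ against another positive exponential. The sign of $X_{n+1}$ inside the exponent is irrelevant, so no inductive ordering on $n$ is needed and componentwise positivity is immediate.

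\textbf{Existence via truncation.} For each $N\geq 1$ I would consider the Galerkin-type truncation obtained by imposing $Y_{N+1}^{N}\equiv 0$ and keeping the first $N$ equations of~\eqref{model} with initial datum $Y_{n}^{N}(0)=X_{n}^{0}$ for $n\leq N$. Local existence is standard for this finite-dimensional polynomial system, positivity of $Y^{N}$ follows from the same variation-of-constants argument, and the crucial a priori bound is a truncation analogue of i): a telescoping computation shows
\begin{equation*}
\tfrac{d}{dt}\sum_{j=1}^{n}(Y_{j}^{N})^{2}(t)=-2k_{n}(Y_{n}^{N})^{2}(t)\,Y_{n+1}^{N}(t)\leq 0\qquad (1\leq n<N),
\end{equation*}
which gives $\sum_{j=1}^{n}(Y_{j}^{N})^{2}(t)\leq\sum_{j=1}^{n}(X_{j}^{0})^{2}$ uniformly in $N\geq n+1$. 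Each $Y_{n}^{N}$ and its derivative are then uniformly bounded on every $[0,T]$, so Arzel\`a--Ascoli plus a diagonal extraction yields a subsequence converging componentwise, uniformly on compact intervals, to some $X$. Uniform convergence is enough to pass to the limit in the integrated form of~\eqref{model}, producing a componentwise solution with the correct initial data and $C^{1}$ components. The case $X^{0}\notin H$ is accommodated precisely because the partial-sum bound---not a global $l^{2}$ bound---is what survives truncation.

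\textbf{Items (i) and (ii).} For (i) on an arbitrary solution, I would differentiate $\sum_{j=1}^{n}X_{j}^{2}$, substitute~\eqref{model}, and write the result, up to an overall factor, as $\sum_{j=1}^{n}(a_{j}-b_{j})$ with $a_{j}=k_{j-1}X_{j-1}^{2}X_{j}$ and $b_{j}=k_{j}X_{j}^{2}X_{j+1}$. The identity $a_{j+1}=b_{j}$ together with $a_{1}=k_{0}X_{0}^{2}X_{1}=0$ makes the sum telescope to $-b_{n}$, which matches the right-hand side of the identity; integrating and using positivity gives~\eqref{phi monotona}. For (ii), if $X_{n}^{0}>0$ the first summand of~\eqref{variation of constants} already yields $X_{n}(t)>0$ for every $t\geq 0$. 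Applied at level $n+1$, the integrand $e^{-k_{n+1}\int_{s}^{t}X_{n+2}(r)\,dr}\,k_{n}X_{n}^{2}(s)$ in~\eqref{variation of constants} is continuous and strictly positive on $[0,t]$, so $X_{n+1}(t)>0$ for every $t>0$; a short induction on $m$ propagates the conclusion to all $m\geq n$. The only real obstacle is the uniform-in-$N$ truncation bound needed for existence in the infinite-energy case; once that is secured, everything else reduces to direct manipulation of~\eqref{variation of constants}.
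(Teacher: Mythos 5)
Your proposal is correct and follows essentially the same route as the paper: positivity read off from the variation-of-constants formula, existence via the finite-dimensional truncation with the telescoping partial-energy bound, Ascoli--Arzel\`a and a diagonal extraction, and items (i) and (ii) by the same telescoping and induction arguments. The only cosmetic difference is the factor of $2$ in your truncated energy identity (which is in fact the arithmetically correct constant; the paper's displayed identity omits it), and this has no bearing on the monotonicity conclusion.
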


\begin{theorem}
\label{teo existence 2}For every $X^{0}\in H$, there exists at least one
finite energy solution of system~\eqref{model} with initial condition $X^{0}$,
with the property
\begin{equation}
\left|  X\left(  t\right)  \right|  _{H}\leq\left|  X(s)\right|  _{H}%
\quad\text{for all }0\leq s\leq t.\label{energy inequality}%
\end{equation}
Moreover, if $X^{0}\in H\cap\mathbb{R}_{+}^{\mathbb{N}}$, then all
componentwise solutions are finite energy and satisfy~\eqref{energy inequality}.
\end{theorem}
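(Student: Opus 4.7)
The plan is to construct a solution via a Galerkin truncation, then prove the energy inequality using compactness arguments plus, in the positive case, the partial-energy monotonicity already given by Theorem~\ref{teo existence 1}.

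First, for each $N\geq1$ I would consider the finite-dimensional system obtained by imposing $X_n^{(N)}(t)\equiv0$ for $n>N$ and keeping the equations of~\eqref{model} for $1\leq n\leq N$, with initial datum $X_n^{(N)}(0)=X_n^0$ for $n\leq N$. The vector field is polynomial, hence locally Lipschitz, and the same telescoping computation behind Theorem~\ref{teo existence 1}(i) gives the identity $\sum_{j=1}^{N}(X_j^{(N)}(t))^2\equiv\sum_{j=1}^{N}(X_j^0)^2$, so the a priori bound $|X^{(N)}(t)|_H\leq|X^0|_H$ is in force and the truncated solutions exist globally. Each component is uniformly bounded and the equations bound $\dot X_n^{(N)}$ componentwise in terms of $k_n$ and $|X^0|_H^2$, so a diagonal Arzelà--Ascoli extraction produces a limit $X$ with $X_n^{(N)}\to X_n$ uniformly on compact time intervals for every $n$. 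Uniform convergence and boundedness suffice to pass to the limit in the quadratic nonlinearity, so $X$ solves~\eqref{model} componentwise with initial datum $X^0$.

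For each fixed $t\geq0$, componentwise convergence together with the uniform $H$-bound gives $X^{(N)}(t)\rightharpoonup X(t)$ weakly in $H$, and lower semicontinuity of the norm yields $|X(t)|_H\leq|X^0|_H$; in particular $X(t)\in H$. This already gives~\eqref{energy inequality} in the case $s=0$. For arbitrary $s>0$ one can repeat the argument restarting the Galerkin scheme from $X(s)$, obtaining \emph{some} solution $\tilde X$ on $[s,\infty)$ with $|\tilde X(t)|_H\leq|X(s)|_H$; by a further diagonal extraction over a countable dense set of initial times, together with a gluing/selection argument, one can choose the limit $X$ itself to satisfy~\eqref{energy inequality} for all $0\leq s\leq t$. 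The positive case is much cleaner: if $X^0\in H\cap\mathbb{R}_+^{\mathbb{N}}$, any componentwise solution is positive by Theorem~\ref{teo existence 1}, so $X(s)\in\mathbb{R}_+^{\mathbb{N}}$ for every $s\geq0$. Monotone convergence in $n$ applied to~\eqref{phi monotona} gives $|X(t)|_H^2\leq|X^0|_H^2$ directly, and applying Theorem~\ref{teo existence 1}(i) to the restarted componentwise solution $r\mapsto X(s+r)$ yields $\sum_{j=1}^n X_j^2(t)\leq\sum_{j=1}^n X_j^2(s)$ for all $n$ and all $t\geq s$, whence~\eqref{energy inequality} follows on letting $n\to\infty$.

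The main obstacle is the second half of the general-sign step: the Galerkin scheme conserves energy exactly, so only $|X(t)|_H\leq|X^0|_H$ falls out for free, and pinning down a limit that also satisfies $|X(t)|_H\leq|X(s)|_H$ for every $0<s\leq t$ requires a careful selection, since one must show that the ``anomalous dissipation'' $|X^0|_H^2-|X(t)|_H^2$ is monotone nondecreasing in $t$ rather than merely nonnegative. A bookkeeping based on the identity
\[
\sum_{j=1}^n X_j^2(t)-\sum_{j=1}^n X_j^2(s)=-k_n\int_s^t X_n^2(r)X_{n+1}(r)\,dr,
\]
summed in $n$ against the Galerkin conservation law, is the route I would pursue.
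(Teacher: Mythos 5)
Your construction is the same as the paper's: the Galerkin truncation with the exact conservation law~\eqref{finite dim energy conservation}, the resulting uniform componentwise bound $|X_{n}^{N}(t)|\leq|X^{0}|_{H}$, Ascoli--Arzel\`a plus a diagonal extraction, passage to the limit in the quadratic terms, and then $|X(t)|_{H}\leq|X^{0}|_{H}$ by truncating the sum in $n$ (your weak lower semicontinuity argument is the same estimate in different words). The positive case is also handled exactly as in the paper, via Theorem~\ref{teo existence 1}(i); note that you do not even need to restart the solution at time $s$, since part (i) gives $\frac{d}{dt}\sum_{j=1}^{n}X_{j}^{2}(t)\leq0$ for \emph{all} $t\geq0$, hence each partial energy is monotone on $[0,\infty)$ and~\eqref{energy inequality} follows by letting $n\to\infty$.

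The one point where you diverge is the monotonicity $|X(t)|_{H}\leq|X(s)|_{H}$ for $0<s\leq t$ with sign-indefinite data, and you are right that this is the delicate step: the truncated systems conserve energy exactly, so the limit only inherits the bound against the \emph{initial} energy, and nothing in the compactness argument prevents the anomalous dissipation $|X^{0}|_{H}^{2}-|X(t)|_{H}^{2}$ from being non-monotone. However, your proposed patch does not work as described. Restarting the Galerkin scheme from $X(s)$ produces \emph{some} solution on $[s,\infty)$ that has no reason to coincide with the $X$ you already extracted (uniqueness is false here, as the paper's coalescence results show), and gluing restarts over a dense set of times does not obviously produce a single componentwise solution satisfying~\eqref{energy inequality} for every pair $s\leq t$. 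Likewise, in your final identity the flux term $k_{n}\int_{s}^{t}X_{n}^{2}(r)X_{n+1}(r)\,dr$ has no sign for general data, so summing it against the conservation law does not close the argument. You should be aware that the paper's own appendix proof of Theorem~\ref{teo existence 2} also establishes only the case $s=0$ for sign-indefinite initial data (and only ever uses that case later); the full inequality is derived only for positive solutions. So your proposal matches what the paper actually proves, but the extra selection argument you sketch should either be carried out rigorously or dropped in favour of stating the general-sign conclusion for $s=0$ only.
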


See the proofs in the Appendix.

\section{Energy dissipation}

System~\eqref{model} is \emph{formally} conservative:
\[
\frac{1}{2}\frac{d}{dt}\left|  X\right|  _{H}^{2}=\sum_{n=1}^{\infty}\left(
k_{n-1}X_{n}X_{n-1}^{2}-k_{n}X_{n}^{2}X_{n+1}\right)  =0
\]
by a simple rearrangement of the series and the condition $X_{0}=0$.
This rearrangement is rigorous if the solutions live in a sufficiently
regular space (see for example \cite{CFP1}). Such kind of regularity may hold for small times if the
initial condition is very regular itself (see \cite{FriPav}), but for
sufficiently large times we prove that solutions dissipate energy. The
intuitive mechanism is a very fast shift of energy from small to large
$n$ components.

We give two results of energy dissipation for positive solutions:
infinite initial energy becomes finite immediately;\ the energy of a
finite energy solution tends to zero as $t\rightarrow\infty$. Although
the degree of infinity of the energy of initial conditions can be
generalized, for the simplicity of statements we restrict ourselves to
$X^{0}$ of class $l^{\infty}$: the norm $||X^{0}%
||_{\infty}:=\sup_{n}\left| X_{n}^{0}\right| $ is finite.

\begin{theorem}
\label{teo dissip 1}Assume $X^{0}\in l^{\infty}\cap\mathbb{R}_{+}^{\mathbb{N}%
}$ and let $X$ be a positive componentwise solution of system~\eqref{model}
with initial condition $X^{0}$. Then $X$ has finite energy for positive times.
\end{theorem}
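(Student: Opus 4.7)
The plan is to combine the integrated form of Theorem~\ref{teo existence 1}(i) with the variation-of-constants formula~\eqref{variation of constants}, and to convert the resulting integrated dissipation estimate into a pointwise-in-$n$ decay of $X_n(t)$ at any fixed $t>0$. Set $M:=\|X^0\|_\infty$ and $E_n(t):=\sum_{j=1}^n X_j^2(t)$. Theorem~\ref{teo existence 1}(i), together with the assumption $X^0\in l^\infty$, immediately yields two estimates:
\[
E_n(t)\le E_n(0)\le nM^2,\qquad k_n\int_0^t X_n^2(s)X_{n+1}(s)\,ds = E_n(0)-E_n(t)\le nM^2.
\]
The first gives the uniform pointwise bound $X_n(t)\le M\sqrt n$; the second is the integrated-dissipation estimate $\int_0^t X_n^2 X_{n+1}\,ds\le nM^2/2^n$, a quantity summable in $n$, which will serve as the quantitative engine of the proof.

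The next step is to use~\eqref{variation of constants}, which thanks to the positivity of $X$ reads
\[
X_n(t)\le X_n^0\,e^{-k_n\int_0^t X_{n+1}(r)\,dr} + k_{n-1}\int_0^t e^{-k_n\int_s^t X_{n+1}(r)\,dr}\,X_{n-1}^2(s)\,ds.
\]
The first summand is at most $M$ and becomes small as soon as $\int_0^t X_{n+1}$ is non-negligible. For the integral term, the key observation is that the bound $\int_0^t X_n^2 X_{n+1}\,ds\le nM^2/2^n$ forces, via a Chebyshev-type averaging on $[0,t]$, that $X_{n+1}(s)$ cannot stay uniformly small on a large subset of $[0,t]$, and hence the exponential kernel inside the integral suppresses enough of the integrand to compensate the large prefactor $k_{n-1}=2^{n-1}$. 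The natural route is an induction on $n$: assuming a decaying pointwise bound $X_{n-1}(s)\le\phi_{n-1}(s)$ on $[0,t]$, one plugs it into the formula and recovers $X_n(t)\le\phi_n(t)$ of the same type, aiming for a bound of the form $X_n(t)\le C(t)\,2^{-\alpha n}$ for some $\alpha>0$ and all $n\ge n_0(t)$.

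The main obstacle I anticipate is closing this induction: balancing the factor $k_{n-1}=2^{n-1}$ against the exponential damping and the inductive decay of $X_{n-1}^2$. The natural target rate, suggested by the Kolmogorov-type scaling $a_n\sim 2^{-n/3}$ of the self-similar solutions constructed later in the paper, is $\phi_n\sim 2^{-n/3}$; the delicate point is that the VoC integral together with the integrated dissipation estimate preserves this exponent rather than degrading it. Once a pointwise-in-$n$ decay of this type is in hand, $\sum_n X_n^2(t)<\infty$ for every $t>0$ is immediate from $\sum_n 4^{-\alpha n}<\infty$.
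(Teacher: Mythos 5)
Your preparatory estimates are fine, but the engine of the argument fails at the decisive step, and it fails in the \emph{direction} of the inequality. To make the variation-of-constants term $k_{n-1}\int_0^t e^{-k_n\int_s^t X_{n+1}}X_{n-1}^2\,ds$ beat the prefactor $k_{n-1}=2^{n-1}$ you need a \emph{lower} bound on $\int_s^t X_{n+1}(r)\,dr$ of order $n2^{-n}$, so that the kernel is $\lesssim 2^{-n}$. But the integrated dissipation estimate $\int_0^t X_n^2X_{n+1}\,ds\le nM^2 2^{-n}$ is an \emph{upper} bound on a product containing $X_{n+1}$: Chebyshev/Markov applied to it only says that the set where $X_n^2X_{n+1}$ is large has small measure, which is entirely consistent with $X_{n+1}$ being as small as you like (the scenario $X_{n+1}\equiv0$ violates nothing), and in that scenario the exponential kernel gives no damping at all. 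So the claim that this bound ``forces $X_{n+1}$ not to stay uniformly small'' is backwards, and with it the induction collapses. Nor can you dodge the issue by tuning $\alpha$: without damping the recursion $\phi_n\sim 2^{-\alpha n}$ only closes for $\alpha\ge1$, incompatible with the true rate $\alpha=1/3$ that you yourself identify. The same gap affects your treatment of the first VoC term, which also presupposes that $\int_0^tX_{n+1}$ is ``non-negligible.''

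The missing input is dynamical rather than integral, and it is exactly what the paper's Lemma~\ref{lemma_induz} extracts. There one argues by dichotomy on an interval of length $s_n=2^{-n/4}$: either $X_n^2$ dips below $\varepsilon a_n$ somewhere (and then monotonicity of $\phi_n$ finishes), or $X_n^2>\varepsilon a_n$ throughout, in which case the equation for the \emph{next} component, $\dot X_{n+1}\ge k_nX_n^2-k_{n+1}X_{n+1}X_{n+2}\ge a-\lambda X_{n+1}$ with $a=\varepsilon a_nk_n$, forces $X_{n+1}(t+s)\ge\frac a\lambda(1-e^{-\lambda s})$; feeding this lower bound back into $\frac{d}{ds}\phi_n=-k_nX_n^2X_{n+1}$ shows that $\phi_n$ must drop by at least $X_n^2(t)$ within the waiting time. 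This yields $\phi_n(t+s_n)\le\phi_{n-1}(t)+a_n$ with $\sum_ns_n<\infty$ and $\sum_na_n<\infty$, and telescoping over $n\ge M$ gives finite energy at every $t\ge\sum_{k\ge M}s_k$, hence at every $t>0$. Note finally that a genuine lower bound $\int_1^tX_{n+1}\gtrsim k_n^{-1}\log t$ does appear in the paper, but it is proved \emph{using} the finite-energy and $t^{-2}$ decay results, so invoking anything of that kind here would be circular.
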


\begin{theorem}
\label{teo dissip 2}If $X$ is a positive finite energy solution, then
\[
\lim_{t\rightarrow\infty}\left|  X\left(  t\right)  \right|  _{H}^{2}=0.
\]
Moreover, given $L>0$ and $\alpha>0$, there exists $\bar{t}>0$ depending only
on $L$ and $\alpha$ such that for all positive finite energy solutions $X$
with $|X(0)|_H \leq L$ we have $\left|
X\left(  \bar{t}\right)  \right|  _{H}^{2}\leq\alpha$.
\end{theorem}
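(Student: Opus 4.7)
Summing the identity of Theorem~\ref{teo existence 1}(i) over $n$ shows that $E(t):=|X(t)|_H^2$ is the monotone limit of non-increasing functions, hence non-increasing, so $\ell:=\lim_{t\to\infty}E(t)\in[0,|X(0)|_H^2]$ exists; the task is to prove $\ell=0$ and extract a uniform rate on bounded sets of initial data. I plan to establish the pointwise statement first and then derive the uniform one via a compactness argument.

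For the pointwise decay I would proceed by induction on $n\ge 1$, showing $X_n(t)\to 0$. The base case is the prototype: $X_1^2=E_1$ is itself non-increasing by Theorem~\ref{teo existence 1}(i), so $X_1(t)\downarrow X_1^\infty\ge 0$; the same identity integrated in time gives $k_1\int_0^\infty X_1^2X_2\,dr<\infty$, which together with $X_1\ge X_1^\infty$ would yield $X_2\in L^1(0,\infty)$ if $X_1^\infty>0$. Since $X_2$ is non-negative, bounded by $|X(0)|_H$ and Lipschitz (its derivative is bounded in terms of $|X(0)|_H$), $L^1$-integrability would force $X_2(t)\to 0$. But then passage to the limit in $\dot X_2=k_1X_1^2-k_2X_2X_3$ gives $\dot X_2(t)\to k_1(X_1^\infty)^2>0$, incompatible with $X_2\to 0$; hence $X_1^\infty=0$. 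The induction step follows the same pattern: the hypothesis $X_j\to 0$ for $j<n$ gives $E_{n-1}(t)\to 0$, so the monotonicity of $E_n$ forces $X_n^2=E_n-E_{n-1}\to\ell_n$, and if $\ell_n>0$ the same integrability-plus-Lipschitz mechanism makes $X_{n+1}\to 0$ while $\dot X_{n+1}\to k_n\ell_n>0$, a contradiction.

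The main obstacle is that this induction alone yields $X_n(t)\to 0$ for each \emph{individual} $n$, which a priori does not preclude mass cascading to infinitely high modes. To rule this out I would use the Lyapunov quantity $H(t):=\sum_n 2^{-n}X_n^2(t)$: a direct telescoping computation shows $\dot H=-\sum_n X_n^2X_{n+1}\le 0$, so $H$ is non-increasing, and by dominated convergence ($2^{-n}X_n^2(t)\le 2^{-n}|X(0)|_H^2$ is summably dominated and every component tends to $0$), $H(t)\to 0$ and $\sum_n X_n^2X_{n+1}\in L^1(0,\infty)$. Combining this with a quantitative application of Theorem~\ref{teo dissip 1} to the shifted datum $X(\tau)$ (whose $l^\infty$-norm is bounded by $|X(0)|_H\le L$) and with the scaling invariance $Y_n(t)=\lambda X_n(\lambda t)$ of the system, I expect to extract a uniform-in-$t$ tail estimate forcing the iterated limits of $E_n(t)$ in $n$ and $t$ to commute, and hence $\ell=0$.

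For the uniform statement, suppose for contradiction that there exist $X^{(k)}$ with $|X^{(k)}(0)|_H\le L$ and $t_k\to\infty$ such that $|X^{(k)}(t_k)|_H^2\ge\alpha$; by monotonicity $|X^{(k)}(t)|_H^2\ge\alpha$ for all $t\le t_k$. Each $X^{(k)}_n$ is bounded by $L$ with derivative bounded in terms of $L$ and $n$, so Arzel\`a--Ascoli together with a diagonal extraction yields a locally uniform pointwise limit $W$ which is a positive componentwise solution with $|W(0)|_H^2\le L^2$. Applying the pointwise part to $W$ gives $|W(t)|_H^2\to 0$, and the uniform tail control from the previous paragraph upgrades the componentwise convergence $X^{(k)}_n(t)\to W_n(t)$ to strong $l^2$-convergence at each $t$; this produces a time at which $|X^{(k)}(t)|_H^2<\alpha$ for $k$ large, contradicting the hypothesis.
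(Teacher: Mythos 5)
Your componentwise induction is correct as far as it goes (the base case and the inductive step each give $X_n(t)\to 0$ for fixed $n$, via the integrability of the flux $k_nX_n^2X_{n+1}$ plus the Lipschitz bound on $X_{n+1}$), and the computation $\dot H=-\sum_nX_n^2X_{n+1}\le 0$ for $H(t)=\sum_n2^{-n}X_n^2(t)$ is also correct and legitimately term-by-term differentiable. But neither of these touches the actual difficulty of the theorem, which you correctly identify and then leave unresolved: ruling out the scenario in which an amount $\ell>0$ of energy migrates to ever higher modes, so that $E(t)\to\ell>0$ while every $X_n(t)\to0$ and $H(t)\to0$. The Lyapunov function $H$ cannot see this scenario precisely because the weight $2^{-n}$ kills the tail; and the $L^1(0,\infty)$ bound on the total flux $\sum_nX_n^2X_{n+1}$ is already a consequence of $\int_0^\infty k_nX_n^2X_{n+1}\,dt=\phi_n(0)-\phi_n(\infty)\le L^2$ for each $n$, so it carries no new information — note also that $\sum_nX_n^2X_{n+1}$ can be arbitrarily small at a given time even when $E$ is of order one (energy concentrated on a single shell), so integrability of the flux does not force decay of $E$. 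The sentence ``I expect to extract a uniform-in-$t$ tail estimate'' is therefore not a step of a proof but a restatement of the theorem's hard part, and the compactness argument for the uniform statement inherits the same gap (without the tail estimate you only get $|W(t)|_H^2\le\liminf_k|X^{(k)}(t)|_H^2$ by Fatou, which yields no contradiction). The appeal to the scaling $Y_n(t)=\lambda X_n(\lambda t)$ is also suspect here: that is exactly the device the paper uses \emph{after} Theorem~\ref{teo dissip 2} to get the $t^{-2}$ bound, and it needs the uniform statement of Theorem~\ref{teo dissip 2} as input, so invoking it risks circularity.

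What actually closes the gap in the paper is the quantitative time-scale estimate of Lemma~\ref{lemma_induz}: if $X_n^2$ stays above $\varepsilon a_n$ on an interval of length $h=\varepsilon^{-2}s_n$, the equation $\dot X_{n+1}\ge \varepsilon a_nk_n-k_{n+1}m_{n+2}X_{n+1}$ forces $X_{n+1}$ up to a definite level, which drains $\phi_n$ by at least $m_n^2$ over that interval; hence $\phi_n(t+\varepsilon^{-2}s_n)-\phi_{n-1}(t)\le\varepsilon a_n$ with $s_n=2^{-n/4}$, $a_n=C2^{-n/4}$ \emph{summable}. Telescoping over $n$ shows that all the energy is evacuated, up to an error $\varepsilon\sum_na_n$, within the \emph{finite} total time $\varepsilon^{-2}\sum_ns_n$ — uniformly over all initial data with $|X(0)|_H\le L$. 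This geometric summability of the cascade times is the idea missing from your plan; without it the interchange of the limits in $n$ and $t$ cannot be justified.
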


The proof of both statements is based on the following lemma

\begin{lemma}
\label{lemma_induz} Let $X$ be a positive componentwise solution, with
$X\left(  0\right)  \in l^{\infty}\cap\mathbb{R}_{+}^{\mathbb{N}}$ and let
$||X(0)||_{\infty}\leq L$. Let $\phi_{n}(t):=\sum_{k=1}^{n}%
X_{k}^{2}(t)$ for $n\geq1$, let $\phi_{0}(t)\equiv0$ and $\phi_{\infty
}(t):=\sum_{k=1}^{\infty}X_{k}^{2}(t)=|X(t)|^2_H$.

Then there exist two summable sequences of positive numbers $\{a_{n}%
\}_{n\geq1}$ and $\{s_{n}\}_{n\geq1}$ depending only on $L$, such that:

\begin{enumerate}[i)]
\item  for all $n\geq1$, for all $t>0$ and for all $\varepsilon\in(0,1]$ one
has
\begin{equation}
\phi_{n}(t+\varepsilon^{-2}s_{n})-\phi_{n-1}(t)\leq\varepsilon a_{n}%
;\label{eq:ts_lemma_induz}%
\end{equation}

\item  for all integers $M\geq1$ one has
\begin{equation}
\phi_{\infty}(\textstyle\varepsilon^{-2}\sum_{k=M}^{\infty}s_{k})\leq
\phi_{M-1}(0)+\varepsilon\sum_{n=M}^{\infty}a_{n}.\label{eq:ts_lemma_induz_2}%
\end{equation}
\end{enumerate}
\end{lemma}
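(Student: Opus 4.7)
The plan is to derive (ii) from (i) by a telescoping iteration, and to prove (i) by induction on $n$. For (ii), I set $t_{M-1}:=0$ and recursively $t_k := t_{k-1} + \varepsilon^{-2} s_k$ for $k \geq M$, so that $t_n = \varepsilon^{-2}\sum_{j=M}^n s_j \leq T := \varepsilon^{-2}\sum_{j=M}^\infty s_j$. Applying (i) at each step $k$ with $t = t_{k-1}$ yields $\phi_k(t_k) \leq \phi_{k-1}(t_{k-1}) + \varepsilon a_k$; telescoping from $k = M$ to $k = n$ gives $\phi_n(t_n) \leq \phi_{M-1}(0) + \varepsilon\sum_{k=M}^n a_k$. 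Since $\phi_n$ is non-increasing by Theorem~\ref{teo existence 1}(i) and $t_n \leq T$, we have $\phi_n(T) \leq \phi_n(t_n)$; letting $n\to\infty$ by monotone convergence $\phi_n(T)\nearrow\phi_\infty(T)$ gives (ii).

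For (i), the starting point is the identity obtained by integrating $\dot\phi_n = -k_n X_n^2 X_{n+1}$ on $[t,t+s]$ and using $\phi_n(t) - \phi_{n-1}(t) = X_n^2(t)$:
\[
\phi_n(t+s) - \phi_{n-1}(t) = X_n^2(t) - k_n \int_t^{t+s} X_n^2(r)\,X_{n+1}(r)\,dr.
\]
Combined with the a~priori bounds $X_k^2(r) \leq \phi_k(r) \leq \phi_k(0) \leq kL^2$ (from monotonicity of $\phi_k$ in Theorem~\ref{teo existence 1}(i) and $\|X(0)\|_\infty \leq L$), the task reduces to showing $k_n\int_t^{t+\varepsilon^{-2}s_n} X_n^2\,X_{n+1}\,dr \geq X_n^2(t) - \varepsilon a_n$. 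I envision a dichotomy driven by the feedback $\dot X_{n+1} = k_n X_n^2 - k_{n+1}X_{n+1} X_{n+2}$: if $X_n^2$ stays above $\varepsilon a_n$ throughout the window, the source term forces $X_{n+1}$ upward and makes the dissipation integral large; if instead $X_n^2$ drops to $\varepsilon a_n$ at some intermediate time $r$, the relation $X_n^2(r') \leq X_n^2(r) + [\phi_{n-1}(r) - \phi_{n-1}(r')]$ (equivalent to monotonicity of $\phi_n$), together with the inductive hypothesis on the drop of $\phi_{n-1}$, prevents $X_n^2$ from rebounding above $\varepsilon a_n$ before the endpoint.

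The main obstacle is the inductive bookkeeping: the sequences $s_n,\,a_n$ must be summable, depend only on $L$, and be tuned so that at every level the geometric factor $k_n = 2^n$ is absorbed, the a~priori bound $\phi_n(0) \leq nL^2$ (which grows in $n$) is beaten by the gain from dissipation, and the inductive estimate for $\phi_{n-1}$ is strong enough to close the dichotomy. The $\varepsilon^{-2}$ time scaling reflects the slow, integrated character of the dissipation mechanism in the unforced regime, where no fixed point provides exponential relaxation.
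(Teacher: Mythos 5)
Your part (ii) is exactly the paper's argument: the same telescoping over $t_k=\varepsilon^{-2}\sum_{j=M}^{k}s_j$ followed by monotonicity of $\phi_N$. The genuine gap is in part (i), in the ``drop'' branch of your dichotomy. You propose to show that once $X_n^2$ falls to $\varepsilon a_n$ at some time $r\in[t,t+h]$ it cannot rebound before $t+h$, using $X_n^2(r')\leq X_n^2(r)+\phi_{n-1}(r)-\phi_{n-1}(r')$ together with ``the inductive hypothesis on the drop of $\phi_{n-1}$''. But statement~\eqref{eq:ts_lemma_induz} at level $n-1$ controls $\phi_{n-1}(t'+\varepsilon^{-2}s_{n-1})-\phi_{n-2}(t')$, i.e.\ it compares $\phi_{n-1}$ to $\phi_{n-2}$; it says nothing about the decrement of $\phi_{n-1}$ over a sub-interval of $[t,t+h]$. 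That decrement equals $k_{n-1}\int X_{n-1}^2X_n\,d\tau$, the energy flux into mode $n$ from below, and can a priori be as large as $\phi_{n-1}(0)\approx (n-1)L^2$ over a window of length $\varepsilon^{-2}s_n$, so nothing prevents a large rebound: this branch does not close as written. What you are missing is that the statement is deliberately phrased with $\phi_{n-1}$ evaluated at the \emph{earlier} time $t$: since both $\phi_n$ and $\phi_{n-1}$ are nonincreasing, for every $s\in[0,h]$ one has $\phi_n(t+h)-\phi_{n-1}(t)\leq\phi_n(t+s)-\phi_{n-1}(t+s)=X_n^2(t+s)$, so the instant $X_n^2$ touches $\varepsilon a_n$ you are already done. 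No rebound control is needed, and in fact no induction on $n$ is needed anywhere in part (i) --- the paper proves the estimate for each $n$ separately.

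Your other branch (if $X_n^2>\varepsilon a_n$ throughout, then $\dot X_{n+1}\geq \varepsilon a_nk_n-k_{n+1}m_{n+2}X_{n+1}$ with $m_{n+2}$ the a priori sup bound from Theorem~\ref{teo existence 1}(i), so $X_{n+1}$ is driven up and the dissipation integral $k_n\int X_n^2X_{n+1}$ exceeds $X_n^2(t)\leq m_n^2$) is indeed the paper's mechanism. However, you explicitly leave the choice of $a_n,s_n$ and the verification that the dissipation gain beats $m_n^2$ as an ``obstacle''; this is the quantitative core of the lemma. The paper closes it with $s_n=2^{-n/4}$, $a_n=C2^{-n/4}$, reducing everything to the single inequality $2^ns_na_n^2\geq 2m_n^2m_{n+2}\bigl(1+\tfrac{1}{2^ns_nm_{n+2}}\bigr)$, which holds because $m_n$ grows only like $\sqrt{n}$ while $2^ns_na_n^2=C^22^{n/4}$. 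As it stands, your proposal has one branch that fails and the other unverified, so it is a plan rather than a proof.
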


\begin{proof}
For all $n$, let $m_{n}^{2}:=\phi_{n}(0)\vee L$, so that by
equation~\eqref{phi monotona}, $X_{n}(t)\leq m_{n}$ for all $t\geq0$.

We observe that it is possible to find two summable sequences $\{a_{n}\}_n$ and
$\{s_{n}\}_n$ such that for all $n\geq1$:
\[
2^{n}s_{n}a_{n}^{2}\geq2m_{n}^{2}m_{n+2}\left(  1+\frac{1}{2^{n}s_{n}m_{n+2}%
}\right)  .
\]
It is enough to set $s_{n}=2^{-n/4}$ and $a_{n}=C2^{-n/4}$ with $C$ a suitable
constant, and recall that by hypothesis $m_{n}\leq L\sqrt{n}$. We observe that
without changing the sequences, \emph{a fortiori} for all $\varepsilon\leq1$,
\begin{equation}
2^{n}s_{n}a_{n}^{2}\geq2m_{n}^{2}m_{n+2}\left(  1+\frac{1}{\varepsilon
^{-2}2^{n}s_{n}m_{n+2}}\right)  .\label{eq:cond_succ_a_s}%
\end{equation}
\emph{Part 1.} Given the two sequences, we now show that the upper
bounds~\eqref{eq:ts_lemma_induz} hold.

Let $h=\varepsilon^{-2}s_{n}$. Since $\phi$ is nonincreasing, for all
$s\in\lbrack0,h]$,
\[
\phi_{n}(t+h)-\phi_{n-1}(t)\leq\phi_{n}(t+s)-\phi_{n-1}(t+s)=X_{n}^{2}(t+s),
\]
hence, if for some $s$ inside the interval, $X_{n}^{2}(t+s)\leq\varepsilon
a_{n}$ we are done.

On the other hand, let us suppose that $X_{n}^{2}(t+s)>\varepsilon a_{n}$ for
all $0\leq s\leq h$. One has
\[
\phi_{n}(t+h)-\phi_{n-1}(t)=X_{n}^{2}(t)+\int_{0}^{h}\frac{d}{ds}\phi
_{n}(t+s)ds.
\]
We need to prove that
\[
\int_{0}^{h}\frac{d}{ds}\phi_{n}(t+s)ds\leq\varepsilon a_{n}-X_{n}^{2}(t).
\]
For sake of notation simplicity, let $a=\varepsilon a_{n}k_{n}$ and
let $\lambda=k_{n+1}m_{n+2}$. Then, by Theorem~\ref{teo existence 1}-\emph i:
\[
\int_{0}^{h}\frac{d}{ds}\phi_{n}(t+s)ds=\int_{0}^{h}-k_{n}X_{n}^{2}%
(t+s)X_{n+1}(t+s)ds
%\leq-\varepsilon a_nk_n\int_0^hX_{n+1}(t+s)ds
%
%
\leq-a\int_{0}^{h}X_{n+1}(t+s)ds.
\]
We need a lower bound for $X_{n+1}$. In the interval $[t;t+h]$ we know that
\[
\dot{X}_{n+1}=k_{n}X_{n}^{2}-k_{n+1}X_{n+1}X_{n+2}
%\geq k_n\varepsilon a_n-k_{n+1}m_{n+2}X_{n+1}
%
%
\geq a-\lambda X_{n+1},
\]
whence we get
\[
X_{n+1}(t+s)
%\geq e^{-k_{n+1}m_{n+2}s}\left(X_{n+1}(t)-\frac{\varepsilon a_n}{k_{n+1}m_{n+2}}\right)+\frac{\varepsilon a_n}{k_{n+1}m_{n+2}}
%
%
\geq e^{-\lambda s}X_{n+1}(t)+\frac{a}{\lambda}\left(  1-e^{-\lambda
s}\right)  \geq\frac{a}{\lambda}\left(  1-e^{-\lambda s}\right)  .
\]
By substituting into the integral above one gets
\[
\int_{0}^{h}\frac{d}{ds}\phi_{n}(t+s)ds\leq-\frac{a^{2}}{\lambda}\int_{0}%
^{h}\left(  1-e^{-\lambda s}\right)  ds=-\frac{a^{2}}{\lambda^{2}}(e^{-\lambda
h}-1+\lambda h)\leq-\frac{a^{2}}{\lambda^{2}}\frac{\lambda h}{1+\frac
{2}{\lambda h}}.
\]
Substituting next $h$, $a$ and $\lambda$ and recalling the
condition~\eqref{eq:cond_succ_a_s}, we finally get
\[
\int_{0}^{h}\frac{d}{ds}\phi_{n}(t+s)ds\leq-\frac{a^{2}h}{\lambda(1+\frac
{2}{\lambda h})}=-\frac{2^{n}s_{n}a_{n}^{2}}{2m_{n+2}\left(  1+\frac
{1}{\varepsilon^{-2}2^{n}s_{n}m_{n+2}}\right)  }\leq-m_{n}^{2}\leq\varepsilon
a_{n}-X_{n}^{2}(t).
\]
This concludes the first part.

\emph{Part 2.} Let $M\geq1$ and define the sequence $\{t_{n}\}_{n\geq M-1}$ by
$t_{M-1}=0$ and $t_{n}=\varepsilon^{-2}\sum_{k=M}^{n}s_{k}$, for $n\geq M$. We
substitute $t=t_{n-1}$ inside the inequality~\eqref{eq:ts_lemma_induz}:
\[
\phi_{n}(t_{n})-\phi_{n-1}(t_{n-1})=\phi_{n}(t_{n-1}+\varepsilon^{-2}%
s_{n})-\phi_{n-1}(t_{n-1})\leq\varepsilon a_{n}.
\]
Adding the above inequalities for $n$ from $M$ to any number $N>M$, one has
\[
\phi_{N}(t_{N})-\phi_{M-1}(0)=\sum_{n=M}^{N}\left[  \phi_{n}(t_{n})-\phi
_{n-1}(t_{n-1})\right]  \leq\varepsilon\sum_{n=M}^{N}a_{n}.
\]
Monotonicity of $\phi_{N}$ yields that
\[
\phi_{N}(\varepsilon^{-2}{\sum_{k=M}^{\infty}}s_{k})\leq\phi_{N}(t_{N}%
)\leq\phi_{M-1}(0)+\varepsilon\sum_{n=M}^{N}a_{n},
\]
hence letting $N$ go to infinity we get~\eqref{eq:ts_lemma_induz_2}. The proof
of the lemma is complete.
\end{proof}

We are now ready to prove the above theorems. Consider the assumptions
of Theorem~\ref{teo dissip 1}. By letting $\varepsilon=1$ in the
second part of Lemma~\ref{lemma_induz}, one has that $X(t)\in H$ for
$t\geq\sum_{k=M} ^{\infty}s_{k}$. Letting $M$ go to infinity we get
the thesis of Theorem~\ref{teo dissip 1}.

As to Theorem~\ref{teo dissip 2}, let us prove that for all $\alpha>0$
there exists some $\bar{t}>0$ such that
$\phi_{\infty}(\bar{t})\leq\alpha$.  Since
$||X(0)||_\infty\leq|X(0)|_H\leq L$, we let $M=1$ in the second part
of Lemma~\ref{lemma_induz}: we only need to choose $\varepsilon$ in
such a way that $\varepsilon\sum_{n=1}^{\infty} a_{n}\leq\alpha$. One
gets $\bar{t}=\varepsilon^{-2}\sum_{k=1}^{\infty}s_{k}$.  The proof is
complete.

\section{Bounds on the decay of energy as $t\rightarrow\infty$}

In this section we prove a bound from above and another from below, for the
decay of energy as $t\rightarrow\infty$, which essentially say that solutions
decay as $t^{-1}$. The results are restricted to \textit{positive}
componentwise solutions. The first result is due to a scaling argument based
on the fact that the nonlinearity is homogeneous of degree two.

\begin{theorem}
Let $X$ be a positive componentwise solution, with $X\left(  0\right)  \in
l^{\infty}\cap\mathbb{R}_{+}^{\mathbb{N}}$. Then there exists $C>0$ such that
\[
\left|  X\left(  t\right)  \right|  _{H}^{2}\leq\frac{C}{t^{2}}%
\]
for $t\geq1$.
\end{theorem}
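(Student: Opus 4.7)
The key input is the degree-two homogeneity of the nonlinearity: if $X$ is a componentwise solution, then so is $Y(r):=\lambda X(s_{0}+\lambda r)$ for every $s_{0}\geq 0$ and $\lambda>0$, because both sides of system~\eqref{model} gain exactly the factor $\lambda^{2}$. Combined with Theorem~\ref{teo dissip 2} this promotes mere energy decay into a quantitative, scale-invariant estimate.

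By Theorem~\ref{teo dissip 1} the solution enters $H$ immediately, so $L_{0}:=|X(1)|_{H}<\infty$; Theorem~\ref{teo existence 1}-(i) gives that $|X(\cdot)|_{H}$ is nonincreasing on $[1,\infty)$. Apply Theorem~\ref{teo dissip 2} once with $L=1$ and $\alpha=1/4$ to extract a universal time $c>0$ such that every positive finite energy solution $Z$ with $|Z(0)|_{H}\leq 1$ satisfies $|Z(c)|_{H}^{2}\leq 1/4$. For any $s_{0}\geq 1$ and any $M\geq|X(s_{0})|_{H}$, the rescaled, time-shifted function $Z(r):=M^{-1}X(s_{0}+M^{-1}r)$ is again a positive finite energy solution with $|Z(0)|_{H}\leq 1$, and this yields the scale-invariant bound
\[
|X(s_{0}+c/M)|_{H}^{2}\ \leq\ M^{2}/4.
\]

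Iterate. Set $t_{0}:=1$, $M_{0}:=L_{0}$, and inductively $M_{n+1}:=M_{n}/2$ and $t_{n+1}:=t_{n}+c/M_{n}$. An immediate induction using monotonicity gives $|X(t_{n})|_{H}\leq M_{n}=L_{0}\,2^{-n}$, with closed form $t_{n}=1+(c/L_{0})(2^{n}-1)$. For any $t\geq 1$ pick $n\geq 0$ with $t\in[t_{n},t_{n+1}]$; monotonicity then yields
\[
t^{2}|X(t)|_{H}^{2}\ \leq\ t_{n+1}^{2}\,M_{n}^{2}\ \leq\ \Bigl(1+\tfrac{c(2^{n+1}-1)}{L_{0}}\Bigr)^{2}\frac{L_{0}^{2}}{4^{n}},
\]
and the hidden factor $4^{n+1}$ inside the square is exactly cancelled by the $4^{-n}$ prefactor; a short expansion bounds the right-hand side uniformly in $n$ by $(L_{0}+2c)^{2}$.

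The only delicate step is the scale-invariant inequality in the second paragraph: one must verify that the rescaled $Z$ is genuinely a positive componentwise finite energy solution, so that Theorem~\ref{teo dissip 2} applies to it with constant $L=1$. Once this is in hand, the geometric halving of the energy scale $M_{n}$ against the geometric doubling of the time steps $c/M_{n}$ forces the $t^{-2}$ rate essentially by dimensional counting, and the resulting constant $C$ depends only on $L_{0}=|X(1)|_{H}$ and the universal constant $c$.
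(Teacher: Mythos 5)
Your proof is correct and follows essentially the same route as the paper: both exploit the degree-two homogeneity to rescale the solution after each application of the uniform dissipation time $\bar t$ from Theorem~\ref{teo dissip 2}, and then iterate so that the energy halves geometrically while the elapsed times grow geometrically, forcing the $t^{-2}$ rate. The only cosmetic difference is that you rescale by the a priori bound $M_n=L_0 2^{-n}$ (giving deterministic times $t_n$), whereas the paper rescales by the actual energies $\alpha_k$ and bounds the resulting sum of times using $\alpha_i\le 1/2$.
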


\begin{proof}
First, by Theorem~\ref{teo dissip 1}, the solution has finite energy for
positive times. Hence, by Theorem~\ref{teo dissip 2}, there is a time $t_{0}$
such that $\left|  X\left(  t_{0}\right)  \right|  _{H}^{2}\leq1$. It is thus
not restrictive to prove the theorem in the case $\left|  X^{0}\right|
_{H}^{2}=1$.

First a general fact: from Theorem~\ref{teo dissip 2} we know that there
exists $\bar{t}>0$ such that for all initial conditions $X^{0}$ with $\left|
X^{0}\right|  _{H}^{2}=1$ we have
\[
\left|  X\left(  \bar{t}\right)  \right|  _{H}^{2}\leq\frac{1}{4}.
\]

Let us start the proof, for a solution $X$ such that $\left|  X^{0}\right|
_{H}^{2}=1$. Let $\alpha_{1}^{2}=\left|  X\left(  \bar{t}\right)  \right|
_{H}^{2}$. Consider the rescaled $H$-valued function $Y=\left(  Y_{n}\right)
_{n\in\mathbb{N}}$ defined as
\[
Y(t):=\alpha_{1}^{-1}X\left(  \alpha_{1}^{-1}t+\bar{t}\right)  .
\]
We have
\[
\dot{Y}_{n}(t)=\alpha_{1}^{-2}k_{n-1}X_{n-1}^{2}\left(  \alpha_{1}^{-1}%
t+\bar{t}\right)  -\alpha_{1}^{-2}k_{n}X_{n}\left(  \alpha_{1}^{-1}t+\bar
{t}\right)  X_{n+1}\left(  \alpha_{1}^{-1}t+\bar{t}\right)
\]%
\[
=k_{n-1}Y_{n-1}^{2}\left(  t\right)  -k_{n}Y_{n}\left(  t\right)
Y_{n+1}\left(  t\right)  .
\]
This means that $Y$ is a finite energy solution of system~\eqref{model}.
Moreover, $\left|  Y\left(  0\right)  \right|  _{H}^{2}=1$. Hence
\[
\alpha_{2}^{2}:=\left|  Y\left(  \bar{t}\right)  \right|  _{H}^{2}\leq\frac
{1}{4}.
\]
In terms of $X$ we have
\[
\left|  X\left(  \frac{\bar{t}}{\alpha_{1}}+\bar{t}\right)  \right|  _{H}%
^{2}=\alpha_{1}^{2}\alpha_{2}^{2}.
\]
By induction we can prove that
\[
\left|  X\left(  \frac{\bar{t}}{\alpha_{1}\cdot\alpha_{2}\cdot\ldots
\cdot\alpha_{k}}+\dots+\frac{\bar{t}}{\alpha_{1}}+\bar{t}\right)  \right|
_{H}^{2}=\alpha_{1}^{2}\cdot\alpha_{2}^{2}\cdot\ldots\cdot\alpha_{k+1}^{2}%
\]
for every $k\geq1$. Since $\alpha_{i}\leq\frac{1}{2}$, we have
\[
\frac{\bar{t}}{\alpha_{1}\cdot\alpha_{2}\cdot\ldots\cdot\alpha_{k}}%
+\dots+\frac{\bar{t}}{\alpha_{1}}+\bar{t}<\frac{2\bar{t}}{\alpha_{1}%
\cdot\alpha_{2}\cdot\ldots\cdot\alpha_{k}}.
\]
Recall that energy inequality holds for all positive finite energy solutions,
see Theorem~\ref{teo existence 1}. Hence for all $t\geq\frac{2\bar{t}}%
{\alpha_{1}\cdot\alpha_{2}\cdot\ldots\cdot\alpha_{k}}$ we have
\[
\left|  X\left(  t\right)  \right|  _{H}^{2}\leq\alpha_{1}^{2}\cdot\alpha
_{2}^{2}\cdot\ldots\cdot\alpha_{k+1}^{2}.
\]
If we restrict to $\frac{2\bar{t}}{\alpha_{1}\cdot\alpha_{2}\cdot\ldots
\cdot\alpha_{k}}\leq t\leq\frac{2\bar{t}}{\alpha_{1}\cdot\alpha_{2}\cdot
\ldots\cdot\alpha_{k}\cdot\alpha_{k+1}}$ we also have $\alpha_{1}^{2}%
\cdot\alpha_{2}^{2}\cdot\ldots\cdot\alpha_{k+1}^{2}\leq\frac{4\bar{t}^{2}%
}{t^{2}}$. Hence
\[
\left|  X\left(  t\right)  \right|  _{H}^{2}\leq\frac{4\bar{t}^{2}}{t^{2}}%
\]
for all $\frac{2\bar{t}}{\alpha_{1}\cdot\alpha_{2}\cdot\ldots\cdot\alpha_{k}%
}\leq t\leq\frac{2\bar{t}}{\alpha_{1}\cdot\alpha_{2}\cdot\ldots\cdot\alpha
_{k}\cdot\alpha_{k+1}}$. This implies the claim of the theorem. The proof is complete.
\end{proof}

\begin{theorem}
Let $X$ be a positive componentwise solution, with $X\left(  0\right)  \in
l^{\infty}\cap\mathbb{R}_{+}^{\mathbb{N}}$. Let $n_{0}+1$ be the minimum
integer with the property $X_{n_{0}+1}^{0}>0$. We know from 
Theorem~\ref{teo existence 1} that $X_{n}(t)>0$ for all $n> n_{0}$ and
$t>0$. Then, for some constant $C>0$, for every $n> n_{0}$ and
$t\geq1$ we have
\[
\int_{1}^{t}X_{n+1}(s)ds\geq k_{n}^{-1}\log t+k_{n}^{-1}\log\left(
\frac{X_{n}(1)}{C}\right)  .
\]
Thus, in particular, for every $n> n_{0}$,
\[
\int_{1}^{\infty}X_{n+1}(s)ds=\infty
\]
and
\[
\underset{t\rightarrow\infty}{\lim\sup}\, t\cdot X_{n+1}(t)\geq k_{n}^{-1}.
\]
\end{theorem}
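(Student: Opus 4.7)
The plan is to take the logarithmic derivative of $X_n$ and combine it with the upper bound $|X(t)|_H^2\leq C/t^2$ from the previous theorem. Since $n > n_0$, Theorem~\ref{teo existence 1}-ii guarantees $X_n(t) > 0$ for every $t > 0$, so dividing the $n$-th equation of~\eqref{model} by $X_n$ is legitimate and yields
\[
\frac{d}{dt}\log X_n(t) = k_{n-1}\frac{X_{n-1}^2(t)}{X_n(t)} - k_n X_{n+1}(t).
\]
Integrating from $1$ to $t$ and rearranging, one obtains
\[
k_n\int_1^t X_{n+1}(s)\,ds = \log X_n(1) - \log X_n(t) + k_{n-1}\int_1^t \frac{X_{n-1}^2(s)}{X_n(s)}\,ds,
\]
and since the last integral is non-negative (interpreted as $0$ when $n-1=n_0$ and $X_{n_0}\equiv 0$), it can simply be dropped.

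The second ingredient is the pointwise upper bound furnished by the previous theorem: $|X(t)|_H^2\leq C/t^2$ for $t\geq 1$, so in particular $X_n(t)\leq\sqrt{C}/t$, whence $-\log X_n(t)\geq \log t - \tfrac{1}{2}\log C$. Plugging this into the identity above and renaming $\sqrt{C}$ as $C$ produces exactly the inequality in the statement.

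The two consequences follow easily. Letting $t\to\infty$ in the lower bound just obtained gives $\int_1^\infty X_{n+1}(s)\,ds=\infty$. For the $\limsup$ assertion, I would argue by contradiction: if $\limsup_{t\to\infty} t\,X_{n+1}(t) < k_n^{-1}$, then there exist $\eta>0$ and $T\geq 1$ with $X_{n+1}(t)\leq(k_n^{-1}-\eta)/t$ for every $t\geq T$; integrating yields $\int_1^t X_{n+1}(s)\,ds\leq(k_n^{-1}-\eta)\log t+\mathrm{const}$, which for large $t$ contradicts the lower bound. There is no serious obstacle: positivity of $X_n$ (making $\log X_n$ well-defined) together with the $O(1/t)$ pointwise control inherited from the energy decay do all the work; the mildly delicate point is only checking that the discarded integral has the right sign, which it trivially does.
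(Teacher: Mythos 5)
Your proof is correct and essentially the same as the paper's: your integrated logarithmic-derivative inequality $k_n\int_1^t X_{n+1}(s)\,ds \ge \log X_n(1)-\log X_n(t)$ is exactly the logarithm of the bound $X_n(t)\ge X_n(1)e^{-k_n\int_1^t X_{n+1}(s)\,ds}$ that the paper reads off the variation-of-constants identity~\eqref{variation of constants}, and both arguments then conclude by inserting the upper bound $X_n(t)\le C/t$ from the preceding theorem. The two stated consequences are handled correctly (the paper leaves them implicit), so there is nothing to add.
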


\begin{proof}
From identity~\eqref{variation of constants} we have
\[
X_{n}(t)\geq X_{n}(1)e^{-k_{n}\int_{1}^{t}X_{n+1}(s)ds}.
\]
By the upper bound of the previous theorem, there exists a constant $C>0$ such
that
\[
X_{n}(1)e^{-k_{n}\int_{1}^{t}X_{n+1}(s)ds}
\leq X_n(t)
\leq\frac{C}{t}%
\]
hence
\[
-k_{n}\int_{1}^{t}X_{n+1}(s)ds\leq\log\left(  \frac{C}{X_{n}(1)\cdot
t}\right)
\]
namely
\[
\int_{1}^{t}X_{n+1}(s)ds\geq\frac{\log\left(  \frac{X_{n}(1)}{C}\right)  +\log
t}{k_{n}}.
\]
This implies the claims of the theorem. The proof is complete.
\end{proof}

\section{Self-similar solutions and related facts}

We call \textit{self-similar} any solution $X$ of the form $X_{n}%
(t)=a_{n}\cdot\varphi(t)$. It is easy to verify that self-similar solutions
satisfying the equations~\eqref{model} have the form
\begin{equation}
X_{n}(t)=\frac{a_{n}}{t-t_{0}},\quad t> t_{0}, \label{self-similar}%
\end{equation}
with $t_0<0$.
We are interested in \textit{finite energy} self-similar solutions, hence we
require also
\[
\sum_{n=1}^{\infty}a_{n}^{2}<\infty.
\]
In the next section we prove the following result.

\begin{theorem}
\label{teo self-similar 1}Given $t_{0}<0$, there exists a unique finite energy
self-similar solution with $a_{1}\neq0$. In general, given $t_{0}<0$ and
$n_{0}\geq0$, there exists a unique finite energy self-similar solution with
\[
a_{1}=...=a_{n_{0}}=0,\quad a_{n_{0}+1}\neq0
\]
(where the first conditions are meaningful only for $n_{0}>0$). In addition,
the coefficients $a_{n}$ have the property%
\[
\lim_{n\rightarrow\infty}\frac{a_{n}}{k_{n}^{-1/3}}=C_{n_{0}}.
\]
\end{theorem}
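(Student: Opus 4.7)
The plan is to reduce the PDE problem to a one-parameter shooting problem for an algebraic recurrence. Substituting the ansatz $X_n(t) = a_n/(t-t_0)$ into \eqref{model} eliminates both $t$ and $t_0$, leaving
\begin{equation*}
k_n a_n a_{n+1} = a_n + k_{n-1} a_{n-1}^2,
\end{equation*}
or, using $k_n = 2^n$ and assuming $a_n > 0$,
\begin{equation*}
a_{n+1} = 2^{-n} + \frac{a_{n-1}^2}{2 a_n}.
\end{equation*}
With the prescribed data $a_1 = \ldots = a_{n_0} = 0$ and $a_{n_0+1} = \alpha$, this recursion uniquely determines $(a_n)_{n > n_0}$ for each $\alpha > 0$, so existence and uniqueness of a finite energy self-similar solution reduces to showing that exactly one $\alpha$ produces an $\ell^2$ tail.

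Next I would rescale to the conjectured Kolmogorov profile via $b_n := 2^{n/3} a_n$, which converts the recurrence into
\begin{equation*}
b_{n+1} = \frac{b_{n-1}^2}{b_n} + 2^{(1-2n)/3}.
\end{equation*}
The autonomous part $b_{n+1} = b_{n-1}^2/b_n$ becomes linear after setting $c_n := \log b_n$, with characteristic polynomial $(x-1)(x+2)$; its general solution is $c_n = A + B(-2)^n$, and only the family with $B = 0$ (constant $b_n$) stays bounded. Equivalently, linearizing the original recurrence about a Kolmogorov profile $a_n = C\cdot 2^{-n/3}$ yields characteristic roots $2^{-1/3}$ (stable) and $-2^{2/3}$ (unstable). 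The task is therefore to pin down the unique $\alpha$ lying on the stable manifold of this one-dimensional unstable direction.

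To execute the shooting rigorously, I would exploit the sign structure of the recurrence: the right hand side is increasing in $a_{n-1}$ and decreasing in $a_n$, so perturbing $\alpha$ upward produces perturbations in $a_n$ that alternate in sign (by the offset-parity from $n_0$) and grow in magnitude along the unstable mode. I would show that the set of $\alpha$ for which $(b_n)$ stays bounded reduces to a single point $\alpha^\ast$: for $\alpha$ too small, $b_n$ explodes on even-offset indices and collapses on odd-offset indices; for $\alpha$ too large, the opposite happens; these two failure regions are both open by monotone propagation of the unstable mode. Continuity then forces the boundary to be a single $\alpha^\ast$, and a direct a priori estimate on the positive trajectory at $\alpha^\ast$ shows that $b_n$ is bounded, hence $(a_n) \in \ell^2$. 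Once $(b_n)$ is known to be bounded and bounded away from $0$, the stable eigenvalue $2^{-1/3}$ together with the summable forcing $2^{(1-2n)/3}$ makes the increments $|b_{n+1} - b_n|$ summable, and $b_n$ converges to a limit, to be named $C_{n_0}$, which gives the desired asymptotic $a_n / k_n^{-1/3} \to C_{n_0}$.

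The main obstacle is the separation step: translating the linearized dichotomy ``only $B = 0$ stays bounded'' into a quantitative nonlinear statement requires controlling the unstable mode through the nonlinear corrections uniformly in $n$, so that the two failure regions cannot overlap and cannot be separated by a nondegenerate interval of ``borderline'' behavior. The dyadic structure with $k_n = 2^n$ is what makes a discrete Lyapunov-type argument tractable here; I expect to need a suitably chosen function of consecutive pairs $(b_{n-1}, b_n)$ whose monotone evolution along the orbit detects which side of the stable manifold a given $\alpha$ lies on, and whose qualitative behavior at $\alpha = \alpha^\ast$ pins the orbit to the Kolmogorov profile.
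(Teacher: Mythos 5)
Your reduction to the recurrence $a_{n+1}=2^{-n}+a_{n-1}^{2}/(2a_{n})$ and the identification of the linearized spectrum about the Kolmogorov profile (eigenvalues $2^{-1/3}$ and $-2^{2/3}$, i.e.\ $1$ and $-2$ after the rescaling $b_n=2^{n/3}a_n$) coincide exactly with the paper's starting point, and the framing as a one-parameter shooting problem is the right one. The gap is in the step you yourself flag as ``the main obstacle'': you have not supplied the mechanism that makes the shooting argument close, and the soft version you propose does not suffice. Openness of the two failure sets plus continuity only tells you that their complement is a nonempty closed set; it does not force it to be a single point, so uniqueness is not established. Nor is existence of an $\ell^2$ orbit, as written: you need to know that every $\alpha$ outside the two failure sets actually produces a bounded $(b_n)$, i.e.\ that unbounded orbits can only fail in the two specific ways you describe, and that is precisely the dichotomy that has to be proved. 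Note also that the dependence on $\alpha$ only enters at step $n_0+3$ (since $a_{n_0}=0$ forces $a_{n_0+2}=2^{-(n_0+1)}$ independently of $\alpha$), and the sign pattern of the perturbation is not immediately alternating, so the ``monotone propagation of the unstable mode'' needs to be controlled through the nonlinearity uniformly in $n$ --- this is where all the work lies.

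The paper resolves this not by a topological shooting argument but by constructing an explicit change of variables that linearizes the dynamics: a power series $h(x)=-\sum_{k\ge -1}d_kx^k$, with coefficients $d_k$ defined by a Catalan-like quadratic recurrence tuned so that $h$ intertwines the map $\widetilde a_{n+1}=2+4\widetilde a_{n-1}^{2}/\widetilde a_n$ with multiplication by $\beta^{2}=2^{-2/3}$ on the argument. In the conjugated variable $\lambda_n=h^{-1}(\widetilde a_n)$ the good orbit is exactly geometric, $\lambda_n=\beta^{2n}R$, and the deviation $\lambda'_n=\log\lambda_n-\log\lambda_{n-1}-\log\beta^{2}$ satisfies a clean dichotomy (Lemma~\ref{lem:lambda_prime}): either $\lambda'_n\equiv0$, or the $\lambda'_n$ alternate in sign and grow at a uniform geometric rate, which forces super-exponential divergence of $a_n$ along a parity class. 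This single lemma delivers existence, uniqueness, and the asymptotic $a_n\sim C_{n_0}\beta^{n}$ with $C_{n_0}=\beta^{2n_0}/R$ all at once; establishing it requires genuinely nontrivial analysis of the generating function (positivity of the $d_k$, the radius of convergence $R\in(4/5,1)$ via Rouch\'e's theorem). A secondary soft spot in your proposal: in the $b_n$ variables the ``stable'' eigenvalue is exactly $1$, not $2^{-1/3}$, so convergence of $b_n$ to a limit does not follow from a contraction along the stable direction; consistently, the limit $C_{n_0}$ is determined by global data ($R$ and $n_0$), not by a local fixed-point argument.
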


Thus we see that Kolmogorov scaling law \cite{K41} (so called K41) appears in
these special solutions, phenomenologically associated to decaying turbulence.
But it is for us a very difficult open problem to understand whether all other
solutions approach the self-similar ones and in which sense, so we cannot say
how general this scaling property should be considered.

The existence of finite energy self-similar solutions is of conceptual
interest in itself, in comparison with analogous investigations for Euler and
Navier-Stokes equations. Also here, in this very simple context, the proof is
highly non trivial. Apart from its intrinsic interest, the existence of such
solutions has a number of implications.

First, they realize perfectly the decay $t^{-1}$, coherently with the previous
section (where the lower bound was more vague). We conjecture that the set of
all finite energy self-similar solutions, set depending on $t_{0}\in
\mathbb{R}$ and $n_{0}\geq0$, attracts all other solutions in a suitable
sense. If this is the case, the decay $t^{-1}$ would be more strictly the true
one for all solutions.

A second implication is the existence of solutions that blow-up backward in
time. This is of interest for two reasons. To explain them let us first
clarify what happens to solutions when we reverse time.

We may consider system~\eqref{model} for negative times, $t\leq0$, and give a
definition of componentwise solution exactly as for $t\geq0$. All definitions
and theorems can be rewritten for negative times. One also has the following
correspondence between the forward and backward problem: let $X=\left(
X\left(  t\right)  \right)  _{t\geq0}$ be a componentwise solution of 
system~\eqref{model}, for $t\geq0$, as usual. Then $Y=\left( Y\left(
t\right) \right) _{t\leq0}$ defined as
\begin{equation}
Y\left(  t\right)  =-X\left(  -t\right)  \label{inversion of time}%
\end{equation}
is a componentwise solution for $t\leq0$. Indeed, by the purely quadratic
nature of the equation,
\begin{align*}
\dot{Y_{n}}(t)  &  =\dot{X_{n}}(-t)=k_{n-1}X_{n-1}^{2}(-t)-k_{n}%
X_{n}(-t)X_{n+1}(-t)\\
&  =k_{n-1}Y_{n-1}^{2}(t)-k_{n}Y_{n}(t)Y_{n+1}(t).
\end{align*}
Thus, any positive solution over $[0,\infty)$ gives rise to a negative
solution over $(-\infty,0]$, and vice versa.

Theorem~\ref{teo self-similar 1} ensures that there exists a self-similar
solution $X_{n}(t):=\frac{a_{n}}{t-t_{0}}$ with $t_{0}<0$ and $a_{n}>0$ for
all $n>n_{0}$. It is easy to check that $X_{n}(t)$ is a componentwise solution
on the open interval $(t_{0},+\infty)$. The energy is finite for all
$t>t_{0}$ and $\lim_{t\to\infty}|X_{n}(t)|=0$ whereas $\lim_{t\to t_{0}^{+}%
}X_{n}(t)=+\infty$ for all $n>n_{0}$. With time 
inversion~\eqref{inversion of time} it is possible to define
$Y(t):=-X(-t)$ for all $n$ and $t\in (-\infty,-t_{0})$. $Y(t)$ is a
componentwise solution of~\eqref{model} on the open interval
$(-\infty,-t_{0})$ and $\lim_{t\to-\infty}|Y_{n}(t)|=0$ whereas
$\lim_{t\to-t_{0}^{-}}|Y_{n}(t)|=+\infty$. This means that $Y_{n}$
blows-up in finite time. Notice that every component blow-up, not only
some norm of the solution.

A consequence of this is a coalescence property of self-similar
solutions. Theorem~\ref{teo existence 2} applied with initial
condition $Y(0)$, states that there exists a finite energy solution
$\tilde Y(t)$, defined for all $t\geq0$, with bounded energy and
initial condition $\tilde Y(0)=Y(0)$. Moreover, the 
condition~\eqref{energy inequality} of the theorem ensure that the
energy of $\tilde Y$ is nonincreasing in $t$, on the contrary $|Y|_H$
is increasing in time. This means that on the system~\eqref{model}
\emph{there is not uniqueness of solutions in
$\mathbb{R}^{\mathbb{N}}$}.  This result is of interest in itself, but
we stress that it holds in the enlarged class of non-necessarily
positive solutions.

Let us invert the time again with $\tilde{X}_{N}(t):=-\tilde
Y_{N}(-t)$. $X$ and $\tilde{X}$ are two componentwise solutions on the
interval $(t_{0},0]$, with decreasing in time and nondecreasing in
time energy respectively. They are different on $(t_0,0)$ and coincide
at time $t=0$. The role of time $t=0$ in the previous argument can be
replaced by any time $t_1>t_0$.  Thus we have

\begin{corollary}
If $X$ is a self-similar solution of the form~\eqref{self-similar}, all its
values are coalescence points, in the sense that for all $t_{1}\geq t_{0}$
there exists a finite energy solution $X^{t_{1}}$, defined for $t\in(-\infty, t_1]$,
such that $X^{t_{1}}\left(  t_{1}\right)  =X\left(  t_{1}\right)  $ and
$X^{t_{1}}(t)\neq X(t)$ on $(t_{0},t_{1})$.
\end{corollary}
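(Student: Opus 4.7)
The plan is to repeat verbatim the construction carried out in the paragraph preceding the corollary, but centered at an arbitrary time $t_1 \in (t_0,\infty)$ instead of at $t=0$. The self-similar solution $X$ plays the role of the ``growing'' backward solution, and Theorem~\ref{teo existence 2} is used to produce a competing solution that agrees with $X$ at $t_1$ but has uniformly bounded energy.

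Concretely, I would first reflect $X$ about the point $t_1$: set $Y(t):=-X(2t_1-t)$ on $[t_1,\,2t_1-t_0)$. The identity~\eqref{inversion of time} (translated so as to reflect about $t_1$ rather than $0$) together with the fact that system~\eqref{model} is autonomous and purely quadratic shows that $Y$ is a componentwise solution on $[t_1,2t_1-t_0)$ with $Y(t_1)=-X(t_1)\in H$ and $|Y(t)|_H\to\infty$ as $t\to(2t_1-t_0)^{-}$. Next, I would apply Theorem~\ref{teo existence 2} to the initial datum $Y(t_1)\in H$ (after a harmless time shift $t\mapsto t-t_1$) to produce a finite-energy solution $\tilde Y$ defined on $[t_1,\infty)$ with $\tilde Y(t_1)=Y(t_1)$ and $|\tilde Y(t)|_H\leq|\tilde Y(t_1)|_H=|X(t_1)|_H$ for every $t\geq t_1$. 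Finally, I would invert time once more: define
\[
X^{t_1}(t):=-\tilde Y(2t_1-t),\qquad t\in(-\infty,\,t_1].
\]
By the same quadratic-autonomous computation, $X^{t_1}$ is a componentwise solution of~\eqref{model} on $(-\infty,t_1]$, and $X^{t_1}(t_1)=-\tilde Y(t_1)=-Y(t_1)=X(t_1)$.

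The main (and really only) substantive step is to check that $X^{t_1}$ truly differs from $X$ on $(t_0,t_1)$: this is an energy comparison. For the self-similar solution, $|X(t)|_H=\|a\|_{\ell^2}/(t-t_0)\to\infty$ as $t\to t_0^{+}$. For the competitor, $|X^{t_1}(t)|_H=|\tilde Y(2t_1-t)|_H\leq|X(t_1)|_H<\infty$ uniformly on $(-\infty,t_1]$, because $\tilde Y$ was produced with the energy inequality~\eqref{energy inequality}. Hence $X^{t_1}\neq X$ in any neighbourhood of $t_0^{+}$, and in particular the two are distinct componentwise solutions on $(t_0,t_1)$ that coalesce at $t_1$. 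The boundary case $t_1=t_0$ in the statement has no content, since $X(t_0)$ is not defined; on $t_1>t_0$ the argument above is complete.

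The principal obstacle I anticipate is purely notational, namely transporting the reflection used in the excerpt (centered at $0$) to one centered at $t_1$; the autonomy of~\eqref{model} and the fact that $t\mapsto 2t_1-t$ preserves the quadratic form of the equations make this transparent. No new analytic input beyond Theorems~\ref{teo existence 2} and~\ref{teo self-similar 1} is required.
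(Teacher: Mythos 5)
Your proposal is correct and follows essentially the same route as the paper: reflect the self-similar solution in time with a sign change, apply Theorem~\ref{teo existence 2} to the (now finite-energy) reflected datum to obtain a competitor with nonincreasing energy, reflect back, and distinguish the two solutions by comparing the strictly decreasing energy $\|a\|_{l^2}/(t-t_0)$ of $X$ with the uniform bound $|X^{t_1}(t)|_H\leq|X(t_1)|_H$ coming from~\eqref{energy inequality}. The paper carries out the reflection about $t=0$ and then remarks that $0$ may be replaced by any $t_1>t_0$; you simply center the reflection at $t_1$ from the start, which is the same argument.
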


Finally, we state the blow-up for negative self-similar finite energy
solutions.

\begin{corollary}
Given $t_{0}>0$ and $n_{0}\geq0$, there exists a unique negative
finite energy self-similar solution, defined on $[0,t_0)$ with
\[
a_{1}=...=a_{n_{0}}=0,\quad a_{n_{0}+1}\neq0.
\]
This solution blows-up at time $t_0$.
\end{corollary}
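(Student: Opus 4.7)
The plan is to obtain this corollary as a direct consequence of Theorem~\ref{teo self-similar 1} via the time-inversion correspondence~\eqref{inversion of time}, exactly as was done for the blow-up discussion preceding the previous corollary. The bookkeeping of signs and of the shifting parameter is the only real content; no new analytic work is required.

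First I would apply Theorem~\ref{teo self-similar 1} with parameter $-t_{0}<0$ (this is legitimate since the hypothesis of that theorem is precisely that the parameter be negative) and with the given $n_{0}\geq 0$. This produces a unique positive finite energy self-similar solution
\[
X_{n}(t)=\frac{a_{n}}{t+t_{0}},\qquad t>-t_{0},
\]
with $a_{1}=\dots=a_{n_{0}}=0$, $a_{n}>0$ for $n>n_{0}$, and with $\sum_{n}a_{n}^{2}<\infty$. In particular $|X(t)|_{H}<\infty$ for every $t>-t_{0}$.

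Next I would set $Y(t):=-X(-t)$. By the computation shown in the text following~\eqref{inversion of time}, $Y$ is a componentwise solution of~\eqref{model} on the interval where $-t>-t_{0}$, that is on $(-\infty,t_{0})$. A direct substitution gives
\[
Y_{n}(t)=-\frac{a_{n}}{-t+t_{0}}=\frac{a_{n}}{t-t_{0}},
\]
so $Y$ has exactly the self-similar form~\eqref{self-similar} with the prescribed shifting time $t_{0}>0$ and the prescribed vanishing pattern $a_{1}=\dots=a_{n_{0}}=0$, $a_{n_{0}+1}\neq 0$. For $t\in[0,t_{0})$ one has $t-t_{0}<0$ and $a_{n}\geq 0$, so all components of $Y(t)$ are nonpositive, and strictly negative for $n>n_{0}$; the energy is finite on $[0,t_{0})$ because $|Y(t)|_{H}=|X(-t)|_{H}<\infty$; and as $t\to t_{0}^{-}$ every component with $n>n_{0}$ satisfies $|Y_{n}(t)|=a_{n}/(t_{0}-t)\to+\infty$, which is the componentwise blow-up at time $t_{0}$.

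For uniqueness, suppose $\widetilde{Y}_{n}(t)=b_{n}/(t-t_{0})$ is any negative finite energy self-similar solution on $[0,t_{0})$ with $b_{1}=\dots=b_{n_{0}}=0$ and $b_{n_{0}+1}\neq 0$. Extending $\widetilde Y$ to $(-\infty,t_{0})$ via the same self-similar formula, the time inversion $\widehat X(t):=-\widetilde Y(-t)$ is a componentwise solution on $(-t_{0},\infty)$ of the form $b_{n}/(t+t_{0})$, with finite energy and with the first $n_{0}$ components vanishing and $b_{n_{0}+1}\neq 0$; since the coefficients $b_{n}$ must be positive for $\widetilde Y$ to be negative on $[0,t_{0})$, $\widehat X$ is a positive finite energy self-similar solution of the type covered by Theorem~\ref{teo self-similar 1} with parameter $-t_{0}$. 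By the uniqueness statement in that theorem, $b_{n}=a_{n}$ for all $n$, hence $\widetilde Y=Y$. The only mildly nontrivial point here is to verify the legitimacy of extending $\widetilde Y$ from $[0,t_{0})$ to the full interval $(-\infty,t_{0})$ before inverting time, which is immediate because the self-similar formula itself defines a componentwise solution wherever $t<t_{0}$.
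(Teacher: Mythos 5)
Your proposal is correct and is exactly the paper's argument: the paper's own proof is the one-line instruction ``apply the inversion~\eqref{inversion of time} to the solution given by Theorem~\ref{teo self-similar 1},'' and you have simply carried out that instruction in detail, including the sign/parameter bookkeeping and the uniqueness transfer. Nothing further is needed.
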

\begin{proof}
Apply the inversion~\eqref{inversion of time} to the solution given by
Theorem~\ref{teo self-similar 1}.
\end{proof}

\section{Existence and uniqueness of self-similar solutions}

In order to prove Theorem~\ref{teo self-similar 1} it is best to restate the
problem in terms of properties for sequences of positive numbers.

If a positive componentwise solution is of the form~\eqref{self-similar}, then
\[
-\frac{a_{n}}{(t-t_{0})^{2}} =\dot X_{n}(t) =k_{n-1}X_{n-1}^{2}(t)-k_{n}%
X_{n}(t)X_{n+1}(t) =k_{n-1}\frac{a_{n-1}^{2}}{(t-t_{0})^{2}}-k_{n}\frac
{a_{n}a_{n+1}}{(t-t_{0})^{2}}
\]
so the sequence $\{a_{n}\}$ must satisfy
\[
a_{n}a_{n+1}=2^{-n}a_{n}+a_{n-1}^{2}/2
\]
for all $n$. It is indeed possible for the first terms $a_{1},a_{2}%
,\dots,a_{n_{0}}$ to be zero, but by induction, if $a_{n_{0}+1}>0$ then the
subsequent coefficients must satisfy
\begin{equation}\label{eq:ss_coeff_rec}
a_{n+1}=2^{-n}+\frac{a_{n-1}^{2}}{2a_{n}}>0,\qquad n\geq n_{0}+1
\end{equation}
To simplify matters and take into account $n_{0}$, let
\[
\widetilde a_{n}:=2^{n+n_{0}}a_{n+n_{0}}
\]
so that the condition becomes $\widetilde a_{0}=0$, $\widetilde a_{1}>0$ and
\begin{equation}\label{eq:ss_coeff_tilde_rec}
\widetilde a_{n+1}=2+4\frac{\widetilde a_{n-1}^{2}}{\widetilde a_{n}},\qquad
n\geq1
\end{equation}
Since, given $a_{n_{0}+1}$ or $\widetilde a_{1}$, this recurrence uniquely
defines the sequence, in order to prove Theorem~\ref{teo self-similar 1} we
have to show that there exists a unique positive number $\widetilde a_{1}$
such that the sequence $\{a_{n}\}_n\in H$. We will prove the following.

\begin{theorem}\label{thm:ss_2} 
There exists a unique real number $\gamma$ such that the sequence
$\{a_{n}\}_n$ is in $H$ iff $\widetilde{a}_{1}=\gamma$ (equivalently,
iff $a_0=a_1=\dots=a_{n_0}=0$ and $a_{n_0+1}=2^{-n_0-1}\gamma$).

Moreover, let $\beta=2^{-1/3}$. One can find $R>0$ and a strictly
decreasing bijective function $h:(0;R]\rightarrow\lbrack0;\infty)$
such that
\begin{enumerate}[i)]
\item $\gamma=h(\beta^{2}R)$,
\item if $\{a_n\}_n\in H$, then $a_{n}=2^{-n}h(\beta^{2(n-n_{0})}R)$ for all
$n>n_0$. In particular $a_{n}\sim C_{n_{0} }\beta^{n}$, for
$n\rightarrow\infty$, with $C_{n_{0}}=\beta^{2n_{0}}/R$.
\end{enumerate}
\end{theorem}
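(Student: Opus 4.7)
The plan is to recast the $l^2$-membership question as a shooting problem for the one-sided recursion $\widetilde{a}_0=0$, $\widetilde{a}_1=\xi>0$, $\widetilde{a}_{n+1}=2+4\widetilde{a}_{n-1}^{2}/\widetilde{a}_n$, pin down the unique initial datum $\xi=\gamma$ whose forward trajectory has the correct rescaled growth, and then wrap the resulting discrete data into the continuous function $h$.

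The first step is an asymptotic normalization. Setting $c_n:=2^{-2n/3}\widetilde{a}_n$, a direct computation turns the recursion into $c_{n+1}=c_{n-1}^{2}/c_n+\epsilon_n$, where $\epsilon_n=2^{(1-2n)/3}=2^{1/3}\beta^{2n}$ is exponentially small; and the condition $\{a_n\}\in H$ is equivalent, up to a harmless tail estimate, to $c_n$ remaining bounded (since $\sum_n 4^{-n}\widetilde{a}_n^{2}=\sum_n 2^{-2n/3}c_n^{2}$). The autonomous part $c_{n+1}=c_{n-1}^{2}/c_n$ admits the one-parameter family of fixed sequences $c_n\equiv c>0$, and linearizing there gives the characteristic polynomial $\lambda^{2}+\lambda-2$ with roots $\lambda=1$ (neutral, tangent to the family) and $\lambda=-2$ (unstable). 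This is the structural input: there is a one-dimensional center-stable curve of trajectories converging to a positive limit, and off this curve the unstable coordinate grows like $(-2)^{n}$, so $c_n$ must either blow up or become negative.

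The second step is shooting in $\xi$. Positivity of $\widetilde{a}_n(\xi)$ holds for every $\xi>0$ by the trivial induction $\widetilde{a}_{n+1}\ge 2$. I would then show that $\xi\mapsto\widetilde{a}_n(\xi)$ is real-analytic in $\xi$ and strictly monotone with a definite parity in $n$ (mirroring the $(-2)^n$ eigenvector), and track the sign of the unstable component via an explicit alternating functional of two consecutive terms. Monotonicity produces a partition $(0,\infty)=S_-\cup\{\gamma\}\cup S_+$, with $S_\pm$ the open half-lines on which, respectively, the even subsequence $c_{2k}$ eventually exceeds any prescribed level while $c_{2k+1}$ tends to $0$, or vice versa; in both cases $\sum 2^{-2n/3}c_n^{2}=\infty$, ruling out membership in $H$. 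Continuity of the shooting function gives existence and monotonicity gives uniqueness of the transition value $\gamma$. At $\xi=\gamma$, only the neutral mode remains, $c_n\to c_\infty>0$, and $a_n\sim C_{n_0}\beta^n$ follows immediately.

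For the last step I would construct $h$ and $R$ together. The cleanest route is backward shooting from infinity: the inverse recursion $\widetilde{a}_{n-1}=\tfrac12\sqrt{\widetilde{a}_n(\widetilde{a}_{n+1}-2)}$ makes sense for $n$ large (where $\widetilde{a}_{n+1}>2$) and, being the inverse of a forward map whose unstable direction has eigenvalue $-2$, it is a contraction in that direction. Parameterizing the tail by the asymptotic constant $c_\infty$ then produces a continuous family of backward orbits, and the equation $\widetilde{a}_0=0$ singles out one member. Letting the parameter $n$ vary continuously (or, equivalently, extending the recursion to the functional equation $h(\beta^{2}s)=2+4h(\beta^{-2}s)^{2}/h(s)$ that the identification $h(\beta^{2(n-n_0)}R)=2^{n}a_n$ forces) defines $h$ on an interval $(0,R]$, with $R$ the unique point at which this extension vanishes. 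Strict monotonicity and bijectivity come from the contractivity and the limit $h(s)\to\infty$ as $s\to 0^+$; the asymptotic $h(s)\sim 1/s$ near $0$ converts $c_n\to c_\infty$ into $C_{n_0}=\beta^{2n_0}/R$.

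The principal obstacle is the shooting step: because one eigenvalue is only neutral, standard hyperbolic stable-manifold arguments do not apply off-the-shelf, and the alternating monotonicity that isolates $\gamma$ must be established directly from the recursion. In particular, one has to rule out the nonlinearly possible pathology of a positive bounded trajectory that never crosses onto the center-stable manifold, which is what makes the dichotomy $S_-\cup\{\gamma\}\cup S_+$ delicate to verify rigorously.
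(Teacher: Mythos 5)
Your normalization $c_n=2^{-2n/3}\widetilde a_n$, the identification of the eigenvalues $1$ and $-2$ of the linearized autonomous map, and the shooting strategy in $\xi=\widetilde a_1$ are all sound, and they describe a genuinely different route from the paper, which never linearizes: the paper first constructs the invariant one-parameter family of exact solutions $\widetilde a_n=h(\beta^{2n}\lambda_0)$ by writing $h$ as an explicit series $h(x)=-\sum_{k\geq-1}d_kx^k$ whose coefficients are forced by the algebraic identity~\eqref{eq:recur_d}, and only then measures the deviation of an arbitrary trajectory from this family through the exact unstable coordinate $\lambda'_n$ of Lemma~\ref{lem:lambda_prime}. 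Some of your intermediate claims are indeed easy (for instance the alternating strict monotonicity of $\xi\mapsto\widetilde a_n(\xi)$ follows by induction from $\widetilde a_0=0$, $\widetilde a_2\equiv2$ and the form of~\eqref{eq:ss_coeff_tilde_rec}).

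Nevertheless there is a genuine gap, and it sits exactly where you place your ``principal obstacle''. First, membership in $H$ is implied by, but not equivalent to, boundedness of $c_n$ (the sum $\sum\beta^{2n}c_n^2$ converges for any subexponential $c_n$); the equivalence only holds after one proves the dichotomy ``either $c_n$ converges to a positive limit or a subsequence diverges super-exponentially'', which is the content of Theorem~\ref{thm:ss_3}, not a tail estimate. Second, and more seriously, because the fixed set of the autonomous map is a whole curve of neutral fixed points, no off-the-shelf stable-manifold theorem applies, and your plan supplies no mechanism for showing (a) that every $\xi$ outside the two blow-up half-lines actually lands on the invariant family rather than drifting along the neutral direction, and (b) that this exceptional set is a single point. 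Both require a \emph{global} expansion estimate for an explicit unstable coordinate, uniform despite the factor $1-2/\widetilde a_{n+1}$ which degenerates when $\widetilde a_{n+1}$ is near $2$; this is exactly what the bound $K_n\geq K>0$ in Lemma~\ref{lem:lambda_prime} accomplishes, and that bound is only available once $h$ exists. Third, you postpone the construction of $h$ to the end, but the properties you need ($h$ analytic and strictly decreasing on $(0,R)$ with $h(0^+)=+\infty$ and $h(R^-)=0$ for a \emph{finite} $R$, and $\gamma=h(\beta^2R)$) are the hardest analytic input of the whole argument: in the paper they require the coefficient recursion~\eqref{eq:recur_d_bis}, uniform estimates on the $\alpha_{k,i}$, the identity~\eqref{eq:algeb_g}, and Rouch\'e's theorem (Theorem~\ref{thm:zero_g_tilde}) to locate $R\in(4/5,1)$. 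Backward shooting plus contractivity in the $-1/2$ direction cannot by itself produce the finiteness of $R$ or the vanishing $h(R^-)=0$, which are global facts about the series rather than local consequences of hyperbolicity. As it stands, the proposal is a correct strategic outline with its central lemma missing.
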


Numerical computations give $\gamma\approx0.917576296$.

It is clear that Theorem~\ref{teo self-similar 1} follows immediately from
Theorem~\ref{thm:ss_2}. The proof of the latter requires some work and will
follow from Theorem~\ref{thm:ss_3} below. Three technical lemmas will be needed.

We start by looking for another sequence $\{d_k\}_{k\geq-1}$ (not
depending on $a_n$), satisfying the peculiar relation below:
\begin{equation}\label{eq:recur_d}
\begin{cases}
d_{-1}=-1 \\
\displaystyle\sum_{i=-1}^{k+1}d_{i}d_{k-i}(4-\beta^{2k+2i})=2d_{k}\beta^{2k}, & \qquad k\geq-1
\end{cases}
\end{equation}

\begin{lemma}
Equations~\eqref{eq:recur_d} uniquely define a sequence of real
numbers $\{d_k\}_{k\geq-1}$ such that $d_k\geq0$ for $k\geq0$. The
power series $\sum_{k=0}^\infty d_kx^k$ has a positive convergence
radius $R>0$. If we let $h(x):=-\sum_{k=-1}^\infty d_kx^k$, the
function $h$ is defined on the interval $(0;R)$ where it is analytic,
nonnegative and strictly decreasing, with $h(0^+)=+\infty$ and
$h(R^-)=0$ (so that it can be continuously extended on $(0;R]$).
\end{lemma}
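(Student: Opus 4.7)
The plan is to treat~\eqref{eq:recur_d} as a recursion uniquely defining $\{d_k\}_{k\geq-1}$ and then deduce the analytic properties of the associated power series. First I would solve~\eqref{eq:recur_d} for $d_{k+1}$. Separating the two boundary contributions $i=-1$ and $i=k+1$ (each yielding a multiple of $d_{-1}d_{k+1}=-d_{k+1}$) from the middle ones, one rewrites~\eqref{eq:recur_d} as
\[
d_{k+1}\bigl(8 - \beta^{2k-2} - \beta^{4k+2}\bigr)
=\sum_{i=0}^{k} d_i d_{k-i}\bigl(4-\beta^{2(k+i)}\bigr) - 2 d_k \beta^{2k}.
\]
The coefficient on the left is strictly positive for every $k\geq-1$: for $k\geq 1$ it already exceeds $6$, while for $k=-1,0$ one checks with $\beta=2^{-1/3}$ that $\beta^{-4}+\beta^{-2}=2^{4/3}+2^{2/3}<8$ and $\beta^{-2}+\beta^{2}<8$. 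This uniquely defines the sequence, and the explicit computation at $k=-1$ gives $d_0=2^{5/3}/(8-2^{4/3}-2^{2/3})$, which is positive and, crucially for the next step, satisfies $d_0>2/3$.

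Next I would prove $d_k\geq 0$ by induction on $k$. The key observation in the inductive step is that for $k\geq 0$ and $i\in\{0,\ldots,k\}$ one has $\beta^{2(k+i)}\leq 1$, hence $4-\beta^{2(k+i)}\geq 3$; the single term $i=k$ in the sum above is $d_0 d_k(4-\beta^{4k})\geq 3 d_0 d_k$, and since $d_0>2/3$, this term alone dominates the negative contribution $-2d_k\beta^{2k}$. The remaining middle terms are nonnegative by the induction hypothesis, so the numerator, and hence $d_{k+1}$, is nonnegative.

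For the positive radius of convergence I would use a majorant argument. The rewritten recursion yields $d_{k+1}\leq \tfrac{2}{3}\sum_{i=0}^{k} d_i d_{k-i}$ for $k\geq 1$ (immediate from the bounds on numerator and denominator; the case $k=0$ is checked separately). Comparing with the auxiliary sequence $D_0:=d_0$, $D_{k+1}:=\tfrac{2}{3}(D*D)_k$, whose generating function $G$ solves the quadratic $G=d_0+\tfrac{2x}{3}G^2$ and is therefore algebraic with explicit positive radius, induction gives $d_k\leq D_k$ and hence $R:=\mathrm{radius}(\sum_k d_k x^k)>0$. From here the remaining properties of $h$ on the open interval $(0,R)$ are automatic: $h$ is analytic since $g(x):=\sum_{k\geq 0} d_k x^k$ is; $h(0^+)=+\infty$ from the $1/x$ term dominating; and $h'(x)=-1/x^2 - \sum_{k\geq 1} k d_k x^{k-1}<0$ (all $d_k\geq 0$) gives strict monotonicity.

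The hard part is the boundary behavior $h(R^-)=0$, from which nonnegativity of $h$ on $(0,R)$ follows by strict monotonicity. I would approach it through the functional equation
\[
h(\beta^2 y) h(y) = 2 h(y) + 4 h(y/\beta^2)^2,
\]
which is equivalent to~\eqref{eq:recur_d} at the level of formal Laurent series, the identity $\beta^{-2}=4\beta^{4}$ (forced by $\beta=2^{-1/3}$) precisely balancing the $y^{-2}$ coefficient. Solving for $h(y/\beta^2)$ expresses it as the square root of $\tfrac{1}{4}h(y)[h(\beta^2 y)-2]$, so any analytic continuation of $h$ (and correspondingly of $g$) past $R$ must come with a branch-type singularity whose appearance at $R$ rather than further out is forced by the vanishing of the quantity inside the square root, i.e.\ by $h(R^-)=0$. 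Making this branch-point analysis rigorous, and ruling out the two alternatives that $h$ remains bounded away from $0$ on $(0,R]$ or that $h$ has a zero strictly inside $(0,R)$, is where I expect the real technical effort to lie.
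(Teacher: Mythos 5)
Your treatment of uniqueness, of the nonnegativity of the $d_k$ by induction, of the positive radius of convergence via a Catalan-type majorant, and of the elementary properties of $h$ on $(0,R)$ (analyticity, $h(0^+)=+\infty$, strict decrease) is correct and close in spirit to the paper's argument: the paper packages your nonnegativity estimate as positivity of explicit coefficients $\alpha_{k,i}$ in the rewritten recursion $d_{k+1}=\frac12\sum_{i}\alpha_{k,i}d_id_{k-i}$, and its Appendix uses essentially your majorant sequence $D_k$.

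The genuine gap is exactly where you flag it: $h(R^-)=0$, which is also what delivers nonnegativity of $h$ on all of $(0,R)$. Your proposed route through the three-scale functional equation $h(y)\bigl[h(\beta^2y)-2\bigr]=4h(y/\beta^2)^2$ does not close. As written it evaluates $h$ at $y/\beta^2>y$, so near $y=R$ it refers to $h$ outside its known domain; reoriented as $h(\beta^2z)\bigl[h(\beta^4z)-2\bigr]=4h(z)^2$ it only yields, on letting $z\to R^-$, the relation $4L^2=h(\beta^2R)\bigl[h(\beta^4R)-2\bigr]$ for the limit $L=h(R^-)\geq0$ (which exists by monotonicity), and nothing forces $L=0$. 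Moreover, arguing that ``the branch point must sit where the radicand vanishes'' is circular unless you already know the singularity of $g$ at $R$ is of square-root type rather than, say, a pole. The paper's key device, which your sketch is missing, is to exploit that $\alpha_{k,i}\to1$ uniformly and exponentially fast: setting $d'_{k+1}=\frac12\sum_i(\alpha_{k,i}-1)d_id_{k-i}$ one gets $|d'_{k+1}|\leq C\beta^{2k}D_{k+1}$, so $\hat g(x)=\sum_kd'_kx^k$ converges on a disc a factor $\beta^{-2}$ larger than that of $g$ (in fact of radius $>1$), and the recursion becomes the exact algebraic identity $g(x)=\hat g(x)+\frac12xg(x)^2$, whence $g(x)=\bigl(1-\sqrt{1-2x\hat g(x)}\bigr)/x$ with $\hat g$ analytic well past $R$. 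The singularity of $g$ is then the smallest zero of $1-2z\hat g(z)$, located by Rouch\'e's theorem at a real $R\in(4/5,1)$, and at that zero $g(R)=1/R$, i.e.\ $h(R^-)=0$. Without some device of this kind, controlling an ``error'' series on a disc strictly larger than $R$, the boundary behaviour remains unproved.
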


\begin{proof}
The second one of~\eqref{eq:recur_d} uniquely defines $d_{k+1}$ as a
function of the previous terms. Truly, the coefficient of $d_{k+1}$ is
$d_{-1}(8-\beta^{2k-2}-\beta^{4k+2})$ which is always nonzero.

With some algebraic manipulations, the above system of equations can
be rewritten as:
\begin{equation}\label{eq:recur_d_bis}
\begin{cases}
d_{-1}=-1 \\
d_{0}=(2\beta^{-1}-\beta-\beta^3)^{-1}\approx0.8155665 \\
d_{k+1}=\frac12\sum_{i=0}^{k}\alpha_{k,i}d_{i}d_{k-i}, &\qquad k\geq0,
\end{cases}
\end{equation}
where
\[
\alpha_{k,i}=\begin{cases}
\dfrac{1+\beta^4-\beta^{-1}}{1-\beta^{7}-\beta^{11}} & k=0,\quad i=0\\[2ex]
\dfrac{(1-\beta^{2k+2})(1+\beta^{2k+7})}{1-\beta^{2k+7}-\beta^{4k+11}} & k>0,\quad i=0,k \\[2ex]
\dfrac{1-\beta^{4k-2i+9}-\beta^{2k+2i+9}}{1-\beta^{2k+7}-\beta^{4k+11}} & 0<i<k\end{cases}
\]
All $\alpha_{k,i}$ are positive and so $d_k\geq0$ for $k\geq0$. We
notice moreover that $\lim_{k\rightarrow\infty} \alpha_{k,i}=1$
uniformly in $i$ (exponentially in $k$, see the Appendix).

The recursion~\eqref{eq:recur_d_bis} is similar to the classical
Catalan sequence, but since we have only some asymptotic control on
the coefficients $\alpha_{k,i}$, and since the behaviour of
$\{d_k\}_k$ strongly depends on the first values (even its convergence
radius does), we need some explicit investigation of its properties.

For $k\geq0$, let
$d'_{k+1}:=\frac12\sum_{i=0}^{k}(\alpha_{k,i}-1)d_{i}d_{k-i}$ and let
$d'_0=d_0$.

Let $g(x):=\sum_{k=0}^\infty d_kx^k$ so that $h(x)=x^{-1}-g(x)$. Let
$\hat g(x):=\sum_{k=0}^\infty d'_kx^k$. By the third one
of~\eqref{eq:recur_d_bis} it follows that
\[
\sum_{k=n}^\infty d_{k+1}x^{k+1}=x\frac12\sum_{k=n}^\infty \sum_{i=0}^{k}\alpha_{k,i}d_{i}x^id_{k-i}x^{k-i}
\]
If $n$ is such that $|\alpha_{k,i}-1|<\epsilon$, we have
\[
g(x)-\sum_{k=0}^{n}d'_kx^k
\leq x\frac{1+\epsilon}2\sum_{k=0}^\infty \sum_{i=0}^{k}d_{i}x^id_{k-i}x^{k-i}
=\frac{1+\epsilon}2xg^2(x)
\]
Writing also the corresponding lower bound and letting
$\epsilon\rightarrow0$ and $n\rightarrow\infty$ accordingly, yields
\[
g(x)-\hat g(x)=\frac12xg^2(x).
\]
The above formula is true for all (complex) $x$ inside the radius of
convergence of $g$ (in the Appendix we show that $\hat g$ has a
convergence radius $\beta^{-2}$ times larger than $g$).

For all $x$ inside the convergence radius of $g$, $g(x)$ must be one
of the roots of the above degree 2 polynomial, i.e.
\begin{equation}\label{eq:algeb_g}
g(x)
%=\frac{1\pm\sqrt{1-2x\hat g(x)}}x
=\frac{1-\sqrt{1-2x\hat g(x)}}x
\end{equation}
(The other root does not satisfy $g(0)=d_0$.)

In the Appendix we show that the radius of convergence of $\hat g$ is
greater than 1 and that inside the unit circle of the complex plane
$1-2z\hat g(z)$ has only one zero, which is in fact some real point
$R\in(4/5, 1)$. It follows that $R$ is the radius of convergence of
$g$, that $h$ is defined on $(0;R)$ and that $h(R^-)=0$. The other
properties follow from inspection of the first coefficients $d_k$.
\end{proof}

From now on $h$ is intended to be extended up to $R$. Since $h$ is
bijective and has image $[0,\infty)$ and recalling that $\widetilde
a_n>0$ for all $n$, it makes sense to compute $h^{-1}(\widetilde a_n)$.

\begin{lemma}
Let $\{\widetilde a_n\}_{n\geq0}$ be any positive sequence
satisfying~\eqref{eq:ss_coeff_tilde_rec}, not necessarily with
$\widetilde a_0=0$. For $n\geq0$ let $\lambda_n:=h^{-1}(\widetilde
a_n)\in(0,R]$. The numbers $\lambda_n$ satisfy:
\begin{equation}\label{eq:recur_lambda}
h(\lambda_{n+1})
=2\left(1-\frac{h(\beta^2\lambda_{n-1})}{h(\lambda_n)}\right)+\frac{h(\beta^4\lambda_{n-1})h(\beta^2\lambda_{n-1})}{h(\lambda_n)},\qquad\qquad n\geq 1
\end{equation}
\end{lemma}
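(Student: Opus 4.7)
The plan is to reduce the lemma to a single functional identity for $h$, namely
\[
4\,h(x)^{2}=h(\beta^{2}x)\bigl[h(\beta^{4}x)-2\bigr],\qquad x\in(0,R],
\]
which I will call $(\star)$; it encodes the recursion \eqref{eq:recur_d} that defined $\{d_k\}$. Once $(\star)$ is established, the lemma follows in one line: since $\widetilde a_m=h(\lambda_m)$, collecting terms on a common denominator turns the right-hand side of \eqref{eq:recur_lambda} into $2+h(\beta^{2}\lambda_{n-1})\bigl[h(\beta^{4}\lambda_{n-1})-2\bigr]/h(\lambda_n)$, and $(\star)$ at $x=\lambda_{n-1}$ reduces this to $2+4\widetilde a_{n-1}^{2}/\widetilde a_n$, which by \eqref{eq:ss_coeff_tilde_rec} is exactly $\widetilde a_{n+1}=h(\lambda_{n+1})$.

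To prove $(\star)$ I work with the generating function $H(x):=-h(x)=\sum_{k\geq-1}d_{k}x^{k}$ and read \eqref{eq:recur_d} as an identity between power series. Multiplying \eqref{eq:recur_d} by $x^{k}$ and summing over $k\geq-1$, the Cauchy product rule gives $\sum_{k}\bigl(4\sum_{i}d_{i}d_{k-i}\bigr)x^{k}=4H(x)^{2}$. The weighted piece $\sum_{i}\beta^{2k+2i}d_{i}d_{k-i}$ equals $\beta^{2k}[x^{k}]\bigl(H(\beta^{2}x)H(x)\bigr)$, and the extra factor $\beta^{2k}$ inside the sum $\sum_k\beta^{2k}[x^k](\cdot)\,x^k$ performs the substitution $x\mapsto\beta^{2}x$, producing $H(\beta^{4}x)H(\beta^{2}x)$. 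The right-hand side $2d_{k}\beta^{2k}$ collects to $2H(\beta^{2}x)$. Altogether
\[
4H(x)^{2}-H(\beta^{2}x)H(\beta^{4}x)=2H(\beta^{2}x),
\]
and substituting $H=-h$ yields $(\star)$.

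These manipulations are legitimate for $|x|<R$ by absolute convergence of $H$, and since $\beta<1$ ensures $\beta^{2}x,\beta^{4}x\in(0,R)$ whenever $x\in(0,R)$, both sides of $(\star)$ are analytic on that interval. Continuity of $h$ up to $R$ (from the preceding lemma) then extends $(\star)$ to the closed endpoint, which covers every $\lambda_{n-1}\in(0,R]$.

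The main obstacle I expect is the generating-function bookkeeping, and in particular the $k=-1$ index. That is where the choice $d_{-1}=-1$ does its essential work: it is exactly this term that puts the constants on either side of $(\star)$ and makes the identity more than a Cauchy-product tautology. Verifying absolute convergence and the interchange of sums on $(0,R)$ is routine given the radius-of-convergence statements in the preceding lemma; once $(\star)$ is in hand, the passage to \eqref{eq:recur_lambda} is the one-line algebra above.
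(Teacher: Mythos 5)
Your proposal is correct and follows essentially the same route as the paper: the paper likewise derives the identity $4h^2(x)=h(\beta^2x)h(\beta^4x)-2h(\beta^2x)$ by expanding $h^2(\lambda_{n-1})$ as a double series and applying the recursion \eqref{eq:recur_d}, then substitutes into \eqref{eq:ss_coeff_tilde_rec}. One bookkeeping note: summing over $k\geq-1$ omits the $k=-2$ term of each Cauchy product, so your two intermediate identities are each off by $4x^{-2}$; these cancel precisely because $\beta^{-6}=4$ (the paper makes this explicit by extending the sum to $k=-2$, where $4-\beta^{2k+2i}$ vanishes), so your final identity $(\star)$ and the conclusion stand.
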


\begin{proof}
Recall that $h(x)=-\sum_{k=-1}^\infty d_kx^k$,
\[
\widetilde a^2_{n-1}
=h^2(\lambda_{n-1})
=\sum_{i,j=-1}^\infty d_id_j\lambda_{n-1}^i\lambda_{n-1}^j
=\sum_{k=-2}^\infty\lambda_{n-1}^k\sum_{i=-1}^{k+1} d_id_{k-i}
\]
Using relation~\eqref{eq:recur_d} for $k\geq-1$ and since
$\beta^{2k+2i}\equiv4$ if $k=-2$ we get:
\begin{multline*}
4\widetilde a^2_{n-1}
=\sum_{k=-1}^\infty 2d_k\beta^{2k}\lambda_{n-1}^k+\sum_{k=-2}^\infty\lambda_{n-1}^k\sum_{i=-1}^{k+1} \beta^{2k+2i}d_id_{k-i}\\
=2\sum_{k=-1}^\infty d_k\beta^{2k}\lambda_{n-1}^k+\sum_{i,j=-1}^\infty\beta^{4i+2j} d_id_j\lambda_{n-1}^{i+j}
=-2h(\beta^2\lambda_{n-1})+h(\beta^4\lambda_{n-1})h(\beta^2\lambda_{n-1})
\end{multline*}
Substituting in equation~\eqref{eq:ss_coeff_tilde_rec} we find
\[
h(\lambda_{n+1})
=\widetilde a_{n+1}
=2+4\frac{\widetilde a^2_{n-1}}{\widetilde a_n}
=2+\frac{-2h(\beta^2\lambda_{n-1})+h(\beta^4\lambda_{n-1})h(\beta^2\lambda_{n-1})}{h(\lambda_n)}
\]
which is equivalent to~\eqref{eq:recur_lambda}.
\end{proof}

The ratio between $\lambda_n$ and $\beta^2\lambda_{n-1}$ plays a
crucial r\^ole in what follows. Next lemma characterizes its logarithm.

\begin{lemma}\label{lem:lambda_prime}
Let $\{\widetilde a_n\}_{n\geq0}$ be any positive sequence
satisfying~\eqref{eq:ss_coeff_tilde_rec}, not necessarily with
$\widetilde a_0=0$. For $n\geq1$ let
$\lambda'_n:=\log\lambda_n-\log\lambda_{n-1}-\log\beta^2$. The
sequence $\lambda'_n$ can present two behaviours only:
\begin{enumerate}
\item If for any $n\geq1$, $\lambda'_n=0$, then $\lambda'_m=0$ for all
$m\geq1$.
\item If $\lambda'_n\neq0$ for some $n\geq1$, then there exist two
constants $c,d>0$ such that for all $k\geq0$:
\[
|\lambda'_{n+k}|\geq c(1+d)^{k}|\lambda'_n|,\qquad\qquad
(-1)^k\lambda'_{n+k}\lambda'_n>0
\]
\end{enumerate}
\end{lemma}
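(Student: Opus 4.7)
The plan treats the dichotomy by combining an algebraic rigidity argument (Case~1) with a linearization analysis about the invariant orbit $\lambda_{m}=\beta^{2}\lambda_{m-1}$ (Case~2). The central tool throughout is the identity
\[
4\,h(\mu)^{2}=h(\beta^{2}\mu)\bigl[h(\beta^{4}\mu)-2\bigr],\qquad \mu\in(0,R],
\]
which is the preceding lemma recast: comparing power series coefficients, this is exactly~\eqref{eq:recur_d} seen through the generating function $h$. Paired with the rewriting $h(\lambda_{n+1})=2+4\,h(\lambda_{n-1})^{2}/h(\lambda_{n})$ of the recurrence, the identity makes the invariant orbit transparent. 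Evaluating it at $\mu=R$ (where $h(R)=0$) gives the useful equality $\beta^{4}R=h^{-1}(2)$.

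For Case~1, suppose $\lambda'_{n}=0$. Going forward, substituting $h(\lambda_{n})=h(\beta^{2}\lambda_{n-1})$ into the recurrence and applying the identity with $\mu=\lambda_{n-1}$ yields $h(\lambda_{n+1})=h(\beta^{4}\lambda_{n-1})=h(\beta^{2}\lambda_{n})$, so $\lambda'_{n+1}=0$; induction covers all $m\geq n$. For the backward step, the shifted recurrence rearranges to $4\,h(\lambda_{n-2})^{2}=h(\lambda_{n-1})[h(\beta^{2}\lambda_{n-1})-2]$, which matches the identity at $\mu=\lambda_{n-1}/\beta^{2}$; this $\mu$ lies in the domain because $\widetilde{a}_{n-2}>0$ forces $\widetilde{a}_{n}>2$, hence $\lambda_{n}<\beta^{4}R$ and $\lambda_{n-1}<\beta^{2}R$. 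Thus $\lambda_{n-2}=\lambda_{n-1}/\beta^{2}$, i.e.\ $\lambda'_{n-1}=0$; iterate down to $m=1$.

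For Case~2, set $\Delta_{m}:=h(\beta^{2}\lambda_{m-1})-h(\lambda_{m})$. Since $h$ is strictly decreasing, $\Delta_{m}$ vanishes iff $\lambda'_{m}=0$ and has the same sign. A direct manipulation of the recurrence via the identity produces the exact one-step relation
\[
\Delta_{m+1}=-\Delta_{m}\left[\beta^{2}\,\frac{h'(\xi_{m})}{h'(\eta_{m})}+\frac{h(\beta^{4}\lambda_{m-1})-2}{h(\lambda_{m})}\right]
\]
for mean-value points $\xi_{m},\eta_{m}\in(0,R)$. Both bracket summands are positive: the first because $h'<0$ throughout, the second because $\lambda_{m-1}<R$ yields $\beta^{4}\lambda_{m-1}<\beta^{4}R=h^{-1}(2)$ and hence $h(\beta^{4}\lambda_{m-1})>2$. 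Thus $\Delta_{m+1}\Delta_{m}<0$, proving $(-1)^{k}\lambda'_{n+k}\lambda'_{n}>0$ by induction. Since $\lambda_{m}\to 0$ at the geometric rate $\beta^{2m}$, the bracket tends to $2\beta^{-2}>2$; converting $\Delta$-ratios into $\lambda'$-ratios via $\Delta_{m}\sim\lambda'_{m}/\lambda_{m}$ yields $|\lambda'_{m+1}|/|\lambda'_{m}|\to 2$. Thus $|\lambda'_{m+1}|/|\lambda'_{m}|\geq 1+d$ for some $d>0$ and all large $m$, and the finitely many earlier ratios (strictly positive since Case~1 keeps $\lambda'_{m}$ nonzero throughout) are absorbed into the constant~$c$.

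The main obstacle is the \emph{uniform} expansion in Case~2: passing from the asymptotic factor $-2$ to the explicit bound $|\lambda'_{n+k}|\geq c(1+d)^{k}|\lambda'_{n}|$ for \emph{every} $k\geq 0$. This reduces to verifying that $\lambda_{m}$ really tracks the rate $\beta^{2m}$ (so the second bracket term stays uniformly near $\beta^{-2}$ and the first near $\beta^{-2}$), which follows from the dominant balance $\widetilde{a}_{m+1}\widetilde{a}_{m}\sim 4\,\widetilde{a}_{m-1}^{2}$ once the additive constant $2$ in the recurrence is controlled.
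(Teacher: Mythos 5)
Your Case~1 and the sign-alternation half of Case~2 are sound. The functional identity $4h(\mu)^2=h(\beta^2\mu)\bigl[h(\beta^4\mu)-2\bigr]$ is exactly what the paper's second lemma establishes (it is \eqref{eq:recur_lambda} before dividing by $h(\lambda_n)$), and your use of it to run the backward induction in Case~1 is a genuinely different and rather cleaner route than the paper's, which handles the indices $k<n$ only by contraposition through Case~2. Your mean-value one-step relation for $\Delta_m=h(\beta^2\lambda_{m-1})-h(\lambda_m)$ checks out, and since both bracket terms are nonnegative with the first strictly positive, it does give the alternation $(-1)^k\lambda'_{n+k}\lambda'_n>0$.

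The gap is the uniform expansion $|\lambda'_{n+k}|\geq c(1+d)^k|\lambda'_n|$, which is the whole content of Case~2, and your proposed way of closing it cannot work. You bound the bracket by linearizing about the invariant orbit, assuming $\lambda_m\sim\beta^{2m}R$ and $\Delta_m\sim\lambda'_m/\lambda_m$; both approximations require $|\lambda'_m|$ small and $\lambda_m$ geometric. But in Case~2 these hypotheses fail for large $m$ by the very conclusion you are trying to prove: $|\lambda'_m|\to\infty$, so $\log\lambda_m-\log\lambda_{m-1}=\log\beta^2+\lambda'_m$ oscillates with unbounded amplitude and $\lambda_m$ does \emph{not} track $\beta^{2m}$ (indeed, by Theorem~\ref{thm:ss_3} the odd or even $\widetilde a_m$ diverge super-exponentially in this case). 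So the ``dominant balance'' you invoke at the end is exactly the Case~1 behaviour and is false in Case~2; the argument is circular. Moreover, without the linearization the bracket has no obvious uniform lower bound: the mean-value points $\xi_m,\eta_m$ are only localized in intervals whose endpoints drift apart as $|\lambda'_m|$ grows, $|h'|$ is not monotone on $(0,R)$ (it blows up at $R^-$ as well as at $0^+$), and the second term $(h(\beta^4\lambda_{m-1})-2)/h(\lambda_m)$ can be arbitrarily small. What is needed is a \emph{global} estimate valid for arbitrarily large $|\lambda'_m|$; the paper gets it from $\psi'<-1$ on the whole domain of $\psi(x)=\log h(e^x)$ (giving $h(xe^y)/h(x)>e^{-y}$ for $y<0$), the uniform bound $K_m=\tfrac12-1/h(\beta^4\lambda_{m-1})\geq K>0$ for $m\geq3$ (from $\widetilde a_{m-1}\geq2$), and the elementary comparison $2Kx^2+(1-2K)x\geq x^{1+K}$ for $x\geq1$; even then the uniform gain $1+K$ is obtained only on the steps with $\lambda'_m>0$, the other steps giving merely $|\lambda'_{m+1}|>|\lambda'_m|$, so the expansion is a two-step phenomenon. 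Your write-up contains none of these ingredients, and your claim that the one-step ratio is eventually $\geq 1+d$ is not justified even heuristically outside the linear regime.
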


\begin{proof}
If $\lambda'_n=0$, then of course $\lambda_n=\beta^2\lambda_{n-1}$,
hence~\eqref{eq:recur_lambda} reduces to:
\[
h(\lambda_{n+1})
=h(\beta^4\lambda_{n-1})
=h(\beta^2\lambda_{n})
\]
Since $h$ is injective, $\lambda_{n+1}=\beta^2\lambda_n$, that is
$\lambda'_{n+1}=0$.  This proves by induction the first part of the
lemma, for $k\geq n$. If the same were not true for some $k<n$, then
by the second part it would have been $\lambda'_n\neq0$.

In particular note that if $\widetilde a_0=0$ (yielding $\lambda_0=R$)
and $\lambda_1=\beta^2R$ (yielding $\lambda'_1=0$), then by the first
part and recurrence~\eqref{eq:ss_coeff_tilde_rec}, $h(\beta^4R)=\widetilde
a_2=2$. This will be used in the second part.

\medskip

For proving the second part, let $\psi(x):=\log h(e^x)$ and let
$\Delta_x(y):=h(xe^y)/h(x)$. 

The function $\psi$ is defined on $(-\infty, \log R)$, where it is
analytic, strictly decreasing, concave and invertible; $\psi$ has one
oblique and one vertical asymptote: $\psi(x)+x\uparrow0$ for
$x\downarrow-\infty$ and $\psi(x)\sim\frac12\log(\log R-x)$ for
$x\uparrow\log R$, so that the image of $\psi$ is the whole $\mathbb
R$. Moreover $\psi'<-1$ on all the domain.

$\Delta_x(y)$ on the other hand is defined for $x\in(0; R)$ and
$y\in(-\infty;\log R-\log x)$ and this last interval always contains a
neighbourhood of 0. $\Delta_x(\cdot)$ is analytic and decreasing on
all its domain, moreover we have:
\begin{align*}
\Delta_x(0)&=1\\
\Delta_x(y)&> e^{-y}>1,\qquad y<0\\
\Delta_x(y)&< e^{-y}<1,\qquad y>0
\end{align*}
in fact,
\[
\Delta_x(y)=h(xe^{y})/h(x)
=\exp(\psi(\log x+y)-\psi(\log x))
=\exp(\psi'(\xi)y)
\]
and $\psi'(\xi)<-1$.
By virtue of~\eqref{eq:recur_lambda} we can write:
\begin{multline*}
\Delta_{\beta^2\lambda_n}(\lambda'_{n+1})
=\frac{h(\lambda_{n+1})}{h(\beta^2\lambda_n)}\\
=\frac2{h(\beta^4\lambda_{n-1})}\frac{h(\beta^4\lambda_{n-1})}{h(\beta^2\lambda_{n})}\left(1-\frac{h(\beta^2\lambda_{n-1})}{h(\lambda_n)}\right)+\frac{h(\beta^4\lambda_{n-1})h(\beta^2\lambda_{n-1})}{h(\beta^2\lambda_n)h(\lambda_n)}\\
=\frac2{h(\beta^4\lambda_{n-1})}\Delta_{\beta^2\lambda_{n}}(-\lambda'_n)\left(1-\Delta_{\lambda_{n}}(-\lambda'_n)\right)+\Delta_{\beta^2\lambda_{n}}(-\lambda'_n)\Delta_{\lambda_{n}}(-\lambda'_n)\\
=2\left[\frac12-\frac1{h(\beta^4\lambda_{n-1})}\right]\Delta_{\beta^2\lambda_{n}}(-\lambda'_n)\Delta_{\lambda_{n}}(-\lambda'_n)+\frac2{h(\beta^4\lambda_{n-1})}\Delta_{\beta^2\lambda_{n}}(-\lambda'_n)
\end{multline*}
Let
\[
K_n:=\frac12-\frac1{h(\beta^4\lambda_{n-1})}\leq\frac12
\]
We need to prove that $K_n$ is always nonnegative, and bounded below
by a positive constant for $n$ large enough.

Since $\lambda_{n-1}\leq R$, $h(\beta^4\lambda_{n-1})\geq
h(\beta^4R)=2$. So $K_n\geq0$ for all $n\geq1$.  For $n\geq3$,
$\widetilde a_{n-1}\geq2$ by~\eqref{eq:ss_coeff_tilde_rec}, hence
$\lambda_{n-1}\leq\beta^4R$, so:
\[
K_n
\geq\frac12-\frac1{h(\beta^4h^{-1}(2))}
=\frac12-\frac1{h(\beta^8R)}
=:K>0.
\]
Going back to the previous expression, if $\lambda'_n>0$, since both
coefficients are positive,
\begin{align*}
\Delta_{\beta^2\lambda_n}(\lambda'_{n+1})
&=2K_n\Delta_{\beta^2\lambda_{n}}(-\lambda'_n)\Delta_{\lambda_{n}}(-\lambda'_n)+(1-2K_n)\Delta_{\beta^2\lambda_{n}}(-\lambda'_n)\\
&>2K_ne^{2\lambda'_n}+(1-2K_n)e^{\lambda'_n}\\
&=2K_ne^{2\lambda'_n}+(1-2K_n)e^{\lambda'_n}-e^{\lambda'_n}+e^{\lambda'_n}-1+1\\
&=1+(2K_ne^{\lambda'_n}+1)(e^{\lambda'_n}-1)
>1
\end{align*}

\emph{A posteriori} we get $\lambda'_{n+1}<0$ and so, after letting
$x:=e^{\lambda'_n}$,
\[
e^{-\lambda'_{n+1}}
>\Delta_{\beta^2\lambda_n}(\lambda'_{n+1})
>2K_nx^2+(1-2K_n)x
=:K_nx(x-1)+\theta_n(x)
\geq \theta_n(x),
\]
where
\[
\theta_n(x):=K_n x^2+(1-K_n)x\geq x^{1+K_n}=:\widetilde\theta_n(x)
\]
The above inequality holds for all $x\geq1$ and follows by comparing
the two functions, their derivatives and by the fact that $K_n<1$,
in fact:
\begin{gather*}
\theta_n(1)=1=\widetilde\theta_n(1)
\qquad\qquad
\theta'_n(1)=1+K_n=\widetilde\theta'_n(1)\\
\theta''_n(x)=2K_n\geq K_n+K_n^2\geq(K_n+K_n^2)x^{(K_n-1)}=\widetilde\theta_n''(x),\quad\text{for }x\geq1
\end{gather*}
Finally $e^{-\lambda'_{n+1}} > \widetilde\theta_n(x)=e^{(1+K_n)\lambda'_n}$
and hence
\[
\lambda'_n>0
\quad\Rightarrow\quad
\lambda'_{n+1} < -(1+K_n)\lambda'_n
\]
On the other hand, if $\lambda'_n<0$, with analogous reasoning:
\[
e^{-\lambda'_{n+1}}
<\Delta_{\beta^2\lambda_n}(\lambda'_{n+1})
<2K_nx^2+(1-2K_n)x
=2K_nx(x-1)+x<x=e^{\lambda'_n}
\]
and hence
\[
\lambda'_n<0
\quad\Rightarrow\quad
\lambda'_{n+1} > -\lambda'_n
\]
Summing up things, for any $\lambda'_n\neq0$, the sequence
$\{\lambda'_{n+k}\}_k$ has alternating signs, moreover:
\[
|\lambda'_{n+k+2}|\geq(1+K_{n+k})|\lambda'_{n+k}|
\]
The second part of the lemma is now proved by induction on $k$,
recalling that $K_n\geq K>0$ for $n\geq3$.
\end{proof}

We are finally able to make the main statement.
\begin{theorem}\label{thm:ss_3}
§§§A sequence $a_n$ satisfying~\eqref{eq:ss_coeff_rec} with $a_{n_0}=0$
and $a_{n_0+1}>0$ can present only two behaviours:
\begin{enumerate}
\item If for any $m\geq1$, $\lambda'_{m}=0$, then there exists $c>0$
such that for all $n> n_0$, $a_n=2^{-n}h(\beta^{2(n-n_0)}R)$; in
this case $a_n\rightarrow0$ for $n\uparrow\infty$ and in the limit
$a_n=C_{n_0}\beta^n+O(\beta^{3n})$, with $C_{n_0}=\beta^{2n_0}/R$.
\item If for any $m\geq1$, $\lambda'_{m}\neq0$, then either the
odd or the even terms of $a_n$ diverge more than exponentially and the
power series $\sum_na_nz^n$ has zero radius of convergence.
\end{enumerate}
\end{theorem}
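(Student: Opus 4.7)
The proof splits along the dichotomy provided by Lemma~\ref{lem:lambda_prime}: either $\lambda'_m=0$ for every $m\geq1$ (Case 1), or $\lambda'_m\neq0$ for every $m\geq1$ with the alternating geometric growth $|\lambda'_{n+k}|\geq c(1+d)^k|\lambda'_n|$ (Case 2). My plan is to treat each branch separately, using the substitution $\widetilde a_n=h(\lambda_n)$ throughout and exploiting the boundary condition $a_{n_0}=0$ to pin down $\lambda_0$.

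For Case 1, I iterate the identity $\log\lambda_n=\log\lambda_{n-1}+\log\beta^2+\lambda'_n$ with $\lambda'_n\equiv0$ to get $\lambda_n=\beta^{2n}\lambda_0$. The assumption $a_{n_0}=0$ gives $\widetilde a_0=0$, and since $h$ is bijective from $(0,R]$ onto $[0,\infty)$ with $h(R)=0$, this forces $\lambda_0=R$. Hence $\widetilde a_n=h(\beta^{2n}R)$, and undoing $\widetilde a_n=2^{n+n_0}a_{n+n_0}$ yields $a_n=2^{-n}h(\beta^{2(n-n_0)}R)$ for $n>n_0$. For the asymptotic I would expand $h(x)=x^{-1}-d_0-d_1 x-\cdots$ at the origin: the leading term contributes $2^{-n}\beta^{-2(n-n_0)}/R$, which via $\beta=2^{-1/3}$ (so $2\beta^2=\beta^{-1}$) simplifies to $(\beta^{2n_0}/R)\beta^n=C_{n_0}\beta^n$, while the next term contributes $-d_0\cdot 2^{-n}=-d_0\beta^{3n}=O(\beta^{3n})$. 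Square-summability of $\{a_n\}$ is then immediate from $\beta<1$.

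For Case 2, assume without loss of generality $\lambda'_1>0$; the case $\lambda'_1<0$ is symmetric, with the roles of odd and even indices reversed. By the alternation and geometric growth in Lemma~\ref{lem:lambda_prime}, $\lambda'_n$ is negative for every even $n\geq2$, with $|\lambda'_n|\geq c(1+d)^{n-1}|\lambda'_1|$. Combining the unconditional bound $\lambda_{n-1}\leq R$ with the identity $\log\lambda_n=\log\lambda_{n-1}+\log\beta^2+\lambda'_n$ gives $\lambda_n\leq R\beta^2\exp(\lambda'_n)$, so $\lambda_n$ decays super-exponentially along even $n$. Since $h(x)\sim x^{-1}$ as $x\to0^+$, this forces $\widetilde a_n\geq\frac12(R\beta^2)^{-1}\exp(|\lambda'_n|)$ for large even $n$, which is doubly exponential in $n$. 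Translating back via $a_{n+n_0}=2^{-(n+n_0)}\widetilde a_n$ preserves the doubly-exponential rate along the same parity, hence $\limsup_n|a_n|^{1/n}=\infty$ and $\sum_n a_n z^n$ has zero radius of convergence; in particular $\{a_n\}\notin H$.

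Most of the hard analysis has already been performed in Lemma~\ref{lem:lambda_prime}, so the present theorem is essentially a bookkeeping exercise. The one place that deserves care is the conversion step in Case 2: one must control the $h$-image of a super-exponentially small quantity using only the leading $x^{-1}$ behaviour of $h$ at $0$. Keeping the multiplicative bound $\lambda_n\leq R\beta^2 e^{\lambda'_n}$ rather than an additive bound on $\log\lambda_n$ is what makes the final double-exponential estimate on $\widetilde a_n$ fall out cleanly.
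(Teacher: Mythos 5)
Your proposal is correct and follows essentially the same route as the paper: Case 1 is the identical iteration $\lambda_n=\beta^{2n}R$ plus the expansion $h(x)=x^{-1}+O(1)$, and Case 2 is the same argument the paper phrases through $\psi(x)=\log h(e^x)$ and its oblique asymptote $\psi(x)\geq -x-1$, which in your notation is exactly the chain $\lambda_n\leq R\beta^2 e^{\lambda'_n}$ followed by $h(x)\gtrsim x^{-1}$ near $0$ along the parity class where $\lambda'_n<0$. No gaps.
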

\begin{proof}
We apply directly Lemma~\ref{lem:lambda_prime}. 

In the first case, for all $n\geq1$, $\lambda'_n\equiv0$, hence
$\lambda_{n}=\beta^{2n}\lambda_0=\beta^{2n}R$, by which
$a_{n+n_0}=2^{-n_0-n}\widetilde a_n=2^{-n_0-n}h(\beta^{2n}R)$. The
limit behaviour of $a_n$ follows from $h(x)=x^{-1}+O(1)$ for
$x\downarrow0$.

\medskip

In the second case, notice that for all $k\geq2$,
\[
\log\widetilde a_k
=\log h(\lambda_k)
=\psi(\log\lambda_k)
=\psi(\log\lambda_{k-1}+\log\beta^2+\lambda'_k)
\geq\psi(\lambda'_k)
\]
Let $n=m$ or $n=m+1$ so that $\lambda'_n<0$ and hence
$\lambda'_{n+2k}<0$ for all $k$ and
$\lambda'_{n+2k}\rightarrow-\infty$ for $k\uparrow\infty$. By the
properties of $\psi$, there exists $x_0$ such that $x\leq
x_0\Rightarrow \psi(x)\geq -x-1$. We deduce that, for
$k$ large enough:
\[
\log\widetilde a_{n+2k}
\geq\psi(\lambda'_{n+2k})
\geq-\lambda'_{n+2k}-1
\geq c(1+d)^{2k}|\lambda'_n|-1
\]
so the power series $\sum_k\widetilde a_k z^k$ has zero radius of
convergence and the same for $\sum_k a_k z^k$.
\end{proof}

\paragraph{Proof of Theorem~\ref{thm:ss_2}.}

By Theorem~\ref{thm:ss_3}, we are in the first case iff $\lambda'_1=0$
that is if $\widetilde a_1=h(\beta^2R)$. In the first case $\{a_n\}\in
H$, since $a_n=C_{n_0}\beta^n+O(\beta^{3n})$ and $\beta<1$. In the
second case of course the sequence is not in $H$.

\section{Appendix}

\subsection{Proof of Theorem~\ref{teo existence 1}}

The first claim is an obvious consequence of 
identity~\eqref{variation of constants}.

Let us prove the existence statement. Given $X^{0}\in\mathbb{R}_{+}%
^{\mathbb{N}}$, consider the finite dimensional system
\[
\left\{
\begin{array}
[c]{ll}%
X_{0}(t)=0 & \qquad\forall t\geq0\\
\dot{X_{n}}(t)=k_{n-1}X_{n-1}^{2}(t)-k_{n}X_{n}(t)X_{n+1}(t) & \qquad\forall
t\geq0,\quad\forall n\in\{1,2,\dots,N\}\\
X_{N+1}(t)=0 & \qquad\forall t\geq0\\
X_{n}(0)=X_{n}^{0} & \qquad\forall n\in\{1,2,\dots,N\}
\end{array}
\right.
\]
It has a unique solution: local existence and uniqueness comes from Cauchy
theorem (the vector field on the right-hand-side is locally Lipschitz
continuous), global existence is a consequence of the bound on maximal
solutions derived from
\begin{equation}
\sum_{n=1}^{N}X_{n}^{2}(t)=\sum_{n=1}^{N}X_{n}^{2}(0)
\label{finite dim energy conservation}%
\end{equation}
which is easily proved by computing $\frac{d}{dt}\sum_{n=1}^{N}X_{n}^{2}(t)$.
Denote by $X^{N}$ the unique solution.

Since $X^{0}\in\mathbb{R}_{+}^{\mathbb{N}}$, the solution is positive, in the
sense that $X_{n}^{N}(t)\geq0$ for every $n=1,...,N$ and $t\geq0$. This simply
happens because
\[
X_{n}^{N}(t)=e^{-k_{n}\int_{0}^{t}X^N_{n+1}(r)dr}X_{n}^{0}+\int_{0}^{t}%
e^{-k_{n}\int_{s}^{t}X^N_{n+1}(r)dr}k_{n-1}\left(X^N_{n-1}(s)\right)^{2}ds.
\]

Moreover, for the solution $X^{N}$, for every $n\in\left\{  1,...,N\right\}  $
we have
\[
\frac{d}{dt}\sum_{j=1}^{n}\left(  X_{j}^{N}\right)  ^{2}=2\sum_{j=1}%
^{n}k_{j-1}\left(  X_{j-1}^{N}\right)  ^{2}X_{j}^{N}-k_{j}\left(  X_{j}%
^{N}\right)  ^{2}X_{j+1}^{N}=-k_{n}\left(  X_{n}^{N}\right)  ^{2}X_{n+1}^{N}.
\]
Hence, being positive, we get $\frac{d}{dt}\sum_{j=1}^{n}\left(  X_{j}%
^{N}\right)  ^{2}\leq0$, namely
\[
\sum_{j=1}^{n}\left(  X_{j}^{N}\left(  t\right)  \right)  ^{2}\leq\sum
_{j=1}^{n}\left(  X_{j}^{0}\right)  ^{2}\text{ for all }t\geq0.
\]
In particular,
\begin{equation}
0\leq X_{n}^{N}\left(  t\right)  \leq\sqrt{\sum_{j=1}^{n}\left(  X_{j}%
^{0}\right)  ^{2}}\text{ for all }t\geq0. \label{equiboundedness for AA}%
\end{equation}

On a bounded interval $[0,T]$, consider now the family $\left(  X_{n}%
^{N}\right)  _{N>n}$, for a given $n\geq1$. The assumptions of
Ascoli-Arzel\`{a} theorem are satisfied for this family. Indeed,
equi-boundedness has been proved above in~\eqref{equiboundedness for AA};
equi-uniform-continuity (equi-Lipschitz continuity, in fact) comes from the
identity
\[
X_{n}^{N}(t)-X_{n}^{N}(s)
=\int_{s}^{t}\left[k_{n-1}\bigl(X_{n-1}^{N}(r)\bigr)^2-k_{n}X_{n}^{N}(r)X_{n+1}^{N}(r)\right]dr
\]
and the already proved equi-boundedness (recall always that $n$ is fixed).
Thus, from Ascoli-Arzel\`{a} theorem, for every $n$ there exists a sequence
$\{  N_{k}^{(n)}\}  _{k\in\mathbb{N}}$ such that $\{
X_{n}^{N_{k}^{(n)}}\}  _{k\in\mathbb{N}}$ converges uniformly to a
continuous function $X_{n}$. By a diagonal procedure, one can modify the
previous extraction procedure and get a single sequence $\left(  N_{k}\right)
_{k\in\mathbb{N}}$ such that $\left(  X_{n}^{N_{k}}\right)  _{k\in\mathbb{N}}$
converges uniformly to $X_{n}$. We can thus pass to the limit, as
$k\rightarrow\infty$, in the equation
\[
X_{n}^{N_{k}}(t)
=X_{n}^{0}+\int_{0}^{t}\left[k_{n-1}\bigl(X_{n-1}^{N_{k}}(r)\bigr)^2-k_{n}X_{n}^{N_{k}}(r)X_{n+1}^{N_{k}}(r)\right]dr
\]
and prove that
\[
X_{n}(t)
=X_{n}^{0}+\int_{0}^{t}\left(k_{n-1}X^2_{n-1}(r)-k_{n}X_{n}(r)X_{n+1}(r)\right)dr.
\]
Thus the functions $X_{n}(\cdot)$ are continuously differentiable and satisfy
system~\eqref{model}. Of course they are non negative, being the uniform limit
of non negative functions. Continuation from an arbitrary bounded time
interval to all $t\geq0$ is classical.

Finally, we have to prove properties (i) and (ii) of the theorem, for
\textit{any} positive componentwise solution $X$. As to (i), we repeat
the argument used above for $X^{N}$: for every $n\geq1$, from
system~\eqref{model}, we have
\[
\frac{d}{dt}\sum_{j=1}^{n}X_{j} ^{2}=2\sum_{j=1}^{n}%
k_{j-1}  X_{j-1}  ^{2}X_{j}-k_{j} X_{j}
^{2}X_{j+1}=-k_{n} X_{n} ^{2}X_{n+1}%
\]
hence, $X$ being positive, we get $\frac{d}{dt}\sum_{j=1}^{n}\left(  X_{j}%
^{N}\right)  ^{2}\leq0$ and thus (i) is proved.

As to (ii), from~\eqref{variation of constants} we have
\[
X_{n}(t)\geq X_{n}^{0}e^{-k_{n}\int_{0}^{t}X_{n+1}(r)dr}>0.
\]
For $X_{n+1}(t)$ we use the inequality (again a consequence 
of~\eqref{variation of constants})
\[
X_{n+1}(t)\geq\int_{0}^{t}e^{-k_{n+1}\int_{s}^{t}X_{n+2}(r)dr}k_{n}X_{n}%
^{2}\left(  s\right)  ds
\]
and the fact that $X_{n}(t)>0$ for all $t>0$. By induction we get the result.
The proof of the theorem is complete.

\subsection{Proof of Theorem~\ref{teo existence 2}}

We introduce the same finite dimensional system as above. We do not have
anymore the positivity property, but we still 
have~\eqref{finite dim energy conservation}. Since the (global)
initial energy is finite, we have
\[
\sum_{n=1}^{N}\left(  X_{n}^{N}\left(  t\right)  \right)  ^{2}\leq\left|
X^{0}\right|  _{H}^{2}%
\]
for all $t\geq0$ and $N\geq1$. This implies again a bound on single
components:
\[
\left|  X_{n}^{N}\left(  t\right)  \right|  \leq\left|  X^{0}\right|  _{H}%
\]
for every $t\geq0$, $N\geq1$ and $n=1,...,N$. Having this bound, we proceed as
above and prove, on a given $\left[  0,T\right]  $, the existence of a
componentwise solution $X$, with $X_{n}^{N_{k}}\rightarrow X_{n}$ uniformly on
$\left[  0,T\right]  $ as $k\rightarrow\infty$, along a diverging sequence
\bigskip$\left(  N_{k}\right)  _{k\in\mathbb{N}}$. From
\[
\sum_{n=1}^{N_{k}}\left(  X_{n}^{N_{k}}\left(  t\right)  \right)  ^{2}%
\leq\left|  X^{0}\right|  _{H}^{2}%
\]
one easily get
\[
\sum_{n=1}^{\infty}\left(  X_{n}\left(  t\right)  \right)  ^{2}\leq\left|
X^{0}\right|  _{H}^{2}%
\]
first by truncating the sum up to a given value $R$ and taking the limit in
$k$, then sending $R$ to infinity. Hence in particular $X\left(  t\right)  \in
H $ for all $t\geq0$.

Finally, assume that $X^{0}\in H\cap\mathbb{R}_{+}^{\mathbb{N}}$ and let $X$
be any componentwise solution. From Theorem~\ref{teo existence 1} (i), it
satisfies
\[
\sum_{j=1}^{n}\left(  X_{j}\left(  t\right)  \right)  ^{2}\leq\sum_{j=1}%
^{n}\left(  X_{j}^{0}\right)  ^{2}\leq\left|  X^{0}\right|  _{H}^{2}%
\]
for every $n\geq1$ and $t\geq0$. This implies that $X$ is finite energy and
satisfies~\eqref{energy inequality}. The proof is complete.

\subsection{Estimates on $\alpha_{k,i}$}

Recall the definition of $\alpha_{k,i}$:
\[
\alpha_{k,i}=\begin{cases}
\dfrac{1+\beta^4-\beta^{-1}}{1-\beta^{7}-\beta^{11}} & k=0,\quad i=0\\[2ex]
\dfrac{(1-\beta^{2k+2})(1+\beta^{2k+7})}{1-\beta^{2k+7}-\beta^{4k+11}} & k>0,\quad i=0,k \\[2ex]
\dfrac{1-\beta^{3k+6}\cosh((k-2i)\log\beta)}{1-\beta^{2k+7}-\beta^{4k+11}} & 0<i<k\end{cases}
\]
We study their behaviour for $k$ large and look for bounds. Notice
that the denominators are all positive for $k\geq0$.
\begin{lemma}
There exists $C>0$ such that for all $k\geq0$ and all $0\leq i\leq k$,
\begin{equation}
|1-\alpha_{k,i}|\leq C\beta^{2k}.
\end{equation}
\end{lemma}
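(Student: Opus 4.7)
The plan is to bound $|1-\alpha_{k,i}|$ by writing $\alpha_{k,i} = N_{k,i}/D_k$ with common denominator $D_k = 1 - \beta^{2k+7} - \beta^{4k+11}$. First I would observe that $D_k$ is uniformly bounded below by a positive constant for $k\geq 1$: since $\beta<1$ we have $D_k \geq 1 - \beta^9 - \beta^{15} = 1 - 2^{-3} - 2^{-5} = 27/32 > 0$, so $1/D_k$ is uniformly bounded and it suffices to show $|D_k - N_{k,i}| = O(\beta^{2k})$ in each of the three cases. The isolated value $\alpha_{0,0}$ contributes only a single number, which can be absorbed into the final constant $C$, so I reduce to $k\geq 1$.

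In the "boundary" case $i\in\{0,k\}$ with $k\geq 1$, a direct expansion gives
\[ N_{k,0} = (1-\beta^{2k+2})(1+\beta^{2k+7}) = 1 - \beta^{2k+2} + \beta^{2k+7} - \beta^{4k+9}, \]
and each of the four terms of $D_k - N_{k,0} = \beta^{2k+2} - 2\beta^{2k+7} + \beta^{4k+9} - \beta^{4k+11}$ is bounded by a constant times $\beta^{2k}$. The "interior" case $0 < i < k$ (with $k\geq 2$) is what I view as the main point: here $N_{k,i} = 1 - \beta^{4k-2i+9} - \beta^{2k+2i+9}$, and the obstacle is to control this uniformly in $i$. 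The key observation is that for $1\leq i\leq k-1$ both exponents $4k-2i+9$ and $2k+2i+9$ attain their minimum value $2k+11$ at the respective endpoints $i=k-1$ and $i=1$, so each power is at most $\beta^{2k+11}$. Hence
\[ |D_k - N_{k,i}| \leq \beta^{2k+7} + \beta^{4k+11} + 2\beta^{2k+11} = O(\beta^{2k}). \]

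Combining the three cases and dividing by the uniformly positive $D_k$ yields the required $|1-\alpha_{k,i}|\leq C\beta^{2k}$. The only step needing a little care is the uniformity over $i$ in the interior case; this is an elementary exercise in exponent arithmetic, and I do not expect any conceptual difficulty beyond bookkeeping of constants.
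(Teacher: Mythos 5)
Your proposal is correct and follows essentially the same route as the paper: the same three-case split ($k=0$; $i\in\{0,k\}$; $0<i<k$), a uniform lower bound on the common denominator $1-\beta^{2k+7}-\beta^{4k+11}$, and the observation that every term in the numerator difference carries an exponent of at least $2k$ plus a constant (your bound $2\beta^{2k+11}=\beta^{2k+8}$ in the interior case is exactly the paper's $\cosh$-based estimate in disguise). The paper additionally pins down the sign of $1-\alpha_{k,i}$ and a matching lower bound of order $\beta^{2k}$, which it reuses elsewhere, but the stated lemma needs only your one-sided estimate.
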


\begin{proof} There are three cases.
\begin{enumerate}[1)]
\item When $k\geq 1$ and $i=0$ or $i=k$, $\alpha_{k,i}<1$, in fact:
\begin{align*}
1-\alpha_{k,i}
&=\dfrac{-\beta^{2k+7}-\beta^{4k+11}+\beta^{2k+2}-\beta^{2k+7}+\beta^{4k+9}}{1-\beta^{2k+7}-\beta^{4k+11}}\\
&=\beta^{2k}(\beta^2-\beta^4)\dfrac{1+\beta^{2k+7}}{1-\beta^{2k+7}-\beta^{4k+11}}
\end{align*}
which is positive. The last factor is decreasing in $k$ and is equal
to $4/3$ when $k=1$, so for $k\geq1$
\[
0<\beta^2-\beta^4\leq \beta^{-2k}(1-\alpha_{k,i})\leq4/3(\beta^2-\beta^4)=:C
\]

\item When $k\geq2$, $0<i<k$, we have $\alpha_{k,i}>1$, since,
recalling that $\cosh(x)\leq e^{|x|}$,
\begin{align*}
\alpha_{k,i}-1
&=\dfrac{-\beta^{3k+6}\cosh((k-2i)\log\beta)+\beta^{2k+7}+\beta^{4k+11}}{1-\beta^{2k+7}-\beta^{4k+11}}\\
&\geq\dfrac{-\beta^{3k+6}\beta^{-|k-2i|}+\beta^{2k+7}+\beta^{4k+11}}{1-\beta^{2k+7}-\beta^{4k+11}}\\
&\geq\dfrac{-\beta^{2k+8}+\beta^{2k+7}+\beta^{4k+11}}{1-\beta^{2k+7}-\beta^{4k+11}}\\
&=\beta^{2k+7}\dfrac{1-\beta+\beta^{2k+4}}{1-\beta^{2k+7}-\beta^{4k+11}}
\end{align*}
The right factor is decreasing in $k$, hence for $k\geq2$
\[
\beta^{-2k}(\alpha_{k,i}-1)\geq\beta^7-\beta^8>0
\]

From $\cosh(x)\geq1$ it follows that
\begin{align*}
\alpha_{k,i}-1
&\leq\dfrac{-\beta^{3k+6}+\beta^{2k+7}+\beta^{4k+11}}{1-\beta^{2k+7}-\beta^{4k+11}}\\
&=\beta^{2k+7}\dfrac{1-\beta^{k-1}+\beta^{2k+4}}{1-\beta^{2k+7}-\beta^{4k+11}}
\end{align*}
The right factor is increasing in $k$ for $k\geq2$ and it tends to 1, so
\[
0<\beta^7-\beta^8\leq\beta^{-2k}(\alpha_{k,i}-1)
\leq\beta^7<C
\]
\end{enumerate}
The third case, namely $k=0$ is included by adjusting $C$.
\end{proof}

\subsection{Estimates on the convergence radius of $\hat g$ and $g$}

By equation~\eqref{eq:algeb_g}, in order to find the radius of
convergence of $g$, we need to know the zeros of $\tilde
g(z):=1-2z\hat g(z)$.

\begin{theorem}\label{thm:zero_g_tilde}
The complex function $\tilde g$ has only one root inside the unit
circle centered in the origin. This root is a real number
$R\in(4/5,1)$.
\end{theorem}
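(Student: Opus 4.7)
The plan is to combine the algebraic identity $g-\hat g = \tfrac12 xg^2$ (which yields the square structure $\tilde g(x) = (1-xg(x))^2$ on the real interval where $g$ converges) with Rouché's theorem and explicit tail bounds on the coefficients $d'_k$. First, using the estimate $|1-\alpha_{k,i}|\leq C\beta^{2k}$ from the preceding subsection together with a crude preliminary bound $d_k\leq C'\rho^{-k}$ obtained inductively from the recursion $d_{k+1}=\tfrac12\sum_i \alpha_{k,i}d_id_{k-i}$ and the boundedness of the $\alpha_{k,i}$, I would deduce $|d'_{k+1}|\leq \tfrac12 C\beta^{2k}\sum_{i=0}^k d_id_{k-i}$, and thus that $\hat g$ (and therefore $\tilde g$) extends analytically to a disk of radius strictly greater than $1$. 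This is where the factor $\beta^{-2}$ between the radii of $g$ and $\hat g$ advertised in the previous lemma materializes.

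Next, I would locate a real root of $\tilde g$ in $(4/5,1)$. Since $\tilde g(x) = (1-xg(x))^2\geq 0$ on $[0,R_g)$ and $\tilde g(0)=1$, the first real zero of $\tilde g$ is precisely $R_g$, the radius of convergence of $g$. To confirm $R_g>4/5$, I would compute enough of the $d'_k$ explicitly and bound the geometric tail to certify $\tilde g(4/5)>0$. To confirm $R_g<1$, I would either show $g(1)=\sum d_k$ diverges by exhibiting a growth lower bound on $d_k$ from the recursion, or, equivalently, verify $\tilde g(x_1)<0$ for some explicit $x_1<1$ by the same truncation-plus-tail technique. The intermediate value theorem then places $R_g\in(4/5,1)$.

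For uniqueness of the root in the unit disk I would apply Rouché's theorem, comparing $\tilde g$ with a polynomial truncation $\tilde g_N(z)=1-2z\sum_{k=0}^N d'_kz^k$ on the circle $|z|=1$. For $N$ large enough (chosen explicitly), one verifies that $\tilde g_N$ has exactly one zero in the closed unit disk and that $\min_{|z|=1}|\tilde g_N(z)|$ strictly dominates the tail $2\sum_{k>N}|d'_k|$; the rapid decay $|d'_k|\lesssim (\beta^2/\rho)^k$ makes the tail negligible. Rouché then transfers the zero count, and the unique zero inside the disk must coincide with the real number $R_g$ already exhibited.

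The main obstacle is precisely this final verification: controlling both the boundary minimum of $\tilde g_N$ and the tail simultaneously requires genuine numerical computation with rigorous interval arithmetic, for an $N$ that is explicit but not tiny. A conceptually cleaner route would be to prove directly that $d'_k\geq 0$ for all $k$; then $|\tilde g(z)|\geq 1-2|z|\hat g(|z|)$ and uniqueness would follow at once from the strict monotonicity of $x\mapsto 2x\hat g(x)$ on $[0,1)$. However, the sign of $\alpha_{k,i}-1$ depends on whether $i\in\{0,k\}$ or $0<i<k$, so non-negativity of the $d'_k$ is not visible from the closed form and would require a separate combinatorial argument; I would attempt this simplification first, and fall back on the Rouché/computation route only if it fails.
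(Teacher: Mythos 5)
Your outline --- analytic continuation of $\hat g$ past the unit circle via $|d'_{k+1}|\lesssim\beta^{2k}\sum_i d_id_{k-i}$, a Rouch\'e comparison with a polynomial truncation, and an intermediate-value argument on the real axis --- is the same strategy the paper uses. But you have not actually closed the decisive step: you defer the verification that the boundary minimum of the truncation dominates the tail, and the zero count of the truncation itself, to ``rigorous interval arithmetic for an $N$ that is explicit but not tiny.'' The device that makes this checkable in the paper is to truncate at degree two: with $\tilde g_A(z)=1-2d'_0z-2d'_1z^2$ and $\tilde g_B=\tilde g-\tilde g_A$, a one-line computation of $\frac{\partial}{\partial\theta}|\tilde g_A(\rho e^{i\theta})|^2$ shows the extrema of $|\tilde g_A|$ on any circle $|z|=\rho$ are attained at the real points $\pm\rho$, so the boundary minima are explicit numbers ($\approx 0.092$ on $|z|=1$, $\approx 0.040$ on $|z|=4/5$); the tail $|\tilde g_B|$ is bounded ($\le 0.062$, resp.\ $\le 0.031$) by summing the closed-form majorant $G(\beta^2|z|)$ of $g$, not term by term. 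Applying Rouch\'e on \emph{both} circles shows $\tilde g$ has no zero in $|z|<4/5$ and exactly one in $|z|<1$, and the sign change $\tilde g(1)<0<\tilde g(4/5)$ forces that zero to be real. Without an argument of this kind your proof is a plan, not a proof.

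Your preferred ``cleaner route'' is not available: $d'_1=\tfrac12(\alpha_{0,0}-1)d_0^2<0$ because $\alpha_{0,0}\approx 0.19<1$ (indeed the paper's value $\tilde g_A(-1)\approx 3.17>1+2d'_0$ forces $d'_1<0$), so the $d'_k$ are not all nonnegative. Moreover, even if they were, the inequality $|\tilde g(z)|\ge 1-2|z|\hat g(|z|)=\tilde g(|z|)$ would only show the disk $|z|<R$ is zero-free; it gives no count of zeros in the annulus $R\le|z|<1$, so it could not yield uniqueness in the unit disk by itself. A smaller point: your identification of the first positive real zero of $\tilde g$ with the radius of convergence of $g$ via $\tilde g=(1-xg)^2$ needs care (an even-order zero of $\tilde g$ need not obstruct analytic continuation of $g$); it is also unnecessary, since once Rouch\'e gives exactly one zero in the unit disk, the sign change already identifies it as a real number in $(4/5,1)$.
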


The proof is based on Rouch\'e's theorem for holomorphic functions.

We split $\tilde g$ in the sum of two terms: $\tilde g=\tilde
g_A+\tilde g_B$, where $\tilde g_A(z)=1-2d'_0z-2d'_1z^2$ and $\tilde
g_B(z)=-2z\sum_{k=2}^\infty d'_kz^k$. Rouch\'e's theorem tells us that
if $|\tilde g_A|>|\tilde g_B|$ on the contour of some compact set of
$\mathbb C$, then $\tilde g_A$ and $\tilde g$ have the same number of
zeros inside the compact.

We will prove that $|\tilde g_A|>|\tilde g_B|$ on the border of two
circles with centre in the origin and radii 1 and $4/5$.

In the following we first give a function $G$ that is an upper bound
for $g$, then we study separately the maximum of $|\tilde g_B|$ and
the minimum of $|\tilde g_A|$ on the two circles. After that we prove
the theorem.

\paragraph{An upper bound for $g$.}
Let
\[
M:=\frac12\max_{k,i}\alpha_{k,i}=\alpha_{2,1}/2
\]
(It follows from the fact that $\max_i\alpha_{k,i}=\alpha_{k,[k/2]}$
and that $\alpha_{k,[k/2]}$ is decreasing in $k$.)

We introduce another auxiliary sequence $\{D_k\}_k$.
\begin{equation}\label{Dk_recur}
\begin{cases}
D_{0}=d_0\\
D_{1}=d_1\\
D_{k+1}=M\sum_{i=0}^{k}D_{i}D_{k-i}, &\qquad k\geq1,
\end{cases}
\end{equation}

\begin{lemma}\label{thm:lem_raggio_g}
Let $G(z):=\sum_{k=0}^\infty D_kz^k$.  The radius of convergence of
$g$ is larger than the radius of convergence of $G$ which is larger
than $\beta^2$.
\end{lemma}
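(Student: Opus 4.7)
The lemma bundles two claims: $R_g \geq R_G$, and $R_G > \beta^2$. I would handle them in that order, the first by coefficient comparison and the second by exploiting the purely quadratic nature of the $D$-recurrence to get a closed form for $G$.

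For $R_g \geq R_G$ I would prove by induction that $d_k \leq D_k$ for every $k \geq 0$. The base cases $d_0 = D_0$ and $d_1 = D_1$ are built into the definition of $\{D_k\}$. Assuming $d_i \leq D_i$ for $i \leq k$, and using both $d_i \geq 0$ (from the previous lemma) and $\alpha_{k,i} \leq 2M$ (by the very choice of $M$), the step
\[
d_{k+1} = \tfrac{1}{2}\sum_{i=0}^k \alpha_{k,i}\, d_i\, d_{k-i} \leq M\sum_{i=0}^k d_i\, d_{k-i} \leq M\sum_{i=0}^k D_i\, D_{k-i} = D_{k+1}
\]
closes the induction. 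Coefficientwise domination of two power series with nonnegative coefficients then gives $R_g \geq R_G$ at once.

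For the second half I would multiply the recurrence in~\eqref{Dk_recur} by $z^{k+1}$ and sum over $k \geq 1$, getting $G(z) - D_0 - D_1 z = Mz\bigl(G(z)^2 - D_0^2\bigr)$. Thus $G$ is a root of $Mz\,G^2 - G + D_0 + (D_1 - MD_0^2)z = 0$, and the branch fixed by $G(0) = D_0$ is
\[
G(z) = \frac{1 - \sqrt{p(z)}}{2Mz}, \qquad p(z) := 1 - 4MD_0\, z - 4M(D_1 - MD_0^2)\, z^2.
\]
Since $G$ has nonnegative coefficients, $R_G$ coincides with the smallest positive zero $z_\ast$ of $p$, so the task reduces to proving $z_\ast > \beta^2$.

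The key structural observation is $D_1 \leq MD_0^2$: indeed $D_1 = d_1 = \tfrac{1}{2}\alpha_{0,0}\, d_0^2 \leq M\, d_0^2 = MD_0^2$ because $\alpha_{0,0} \leq 2M$. Consequently the coefficient of $z^2$ in $p$ is nonnegative, so $p$ is a convex quadratic with $p(0) = 1$. To conclude $p > 0$ on $[0, \beta^2]$ it suffices to check two inequalities: that the vertex $D_0/\bigl(2(MD_0^2 - D_1)\bigr)$ lies to the right of $\beta^2$ (so $p$ is monotone decreasing on $[0, \beta^2]$) and that $p(\beta^2) > 0$. Both become explicit algebraic inequalities once one substitutes $M = \alpha_{2,1}/2$, $D_0 = (2\beta^{-1} - \beta - \beta^3)^{-1}$, $D_1 = \tfrac{1}{2}\alpha_{0,0} D_0^2$, and $\beta = 2^{-1/3}$. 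I expect this final elementary but tedious algebraic verification to be the main obstacle; the coefficient comparison and the generating-function manipulation are otherwise routine.
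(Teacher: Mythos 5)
Your proposal is correct and follows essentially the same route as the paper: the coefficientwise induction $d_k\leq D_k$ via $\alpha_{k,i}\leq 2M$, the generating-function identity $G(z)-D_0-D_1z=Mz(G(z)^2-D_0^2)$ leading to the closed form $G=(1-\sqrt{p})/(2Mz)$, and the reduction of $R_G>\beta^2$ to locating the smallest positive zero of the quadratic radicand. The only cosmetic difference is that the paper solves the quadratic explicitly (its $A=1-\alpha_{0,0}/2M$ is exactly your $-(D_1-MD_0^2)/(MD_0^2)$) and checks the smaller root exceeds $\beta^2$, whereas you verify positivity of $p$ on $[0,\beta^2]$ via convexity; both end in the same elementary numerical check.
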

\begin{proof}
Thanks to~\eqref{eq:recur_d_bis}, by induction $0\leq d_k\leq D_k$ for
all $k$.

By adding the third one of~\eqref{Dk_recur} we find
\[
G(z)-d_0-d_1z
=\sum_{k\geq1}D_{k+1}z^{k+1}
=Mz\sum_{k\geq1}\sum_{i=0}^{k}D_{i}z^iD_{k-i}z^{k-i}
=MzG(z)^2-Md_0^2z
\]
hence for $z\in\mathbb C$ inside the convergence radius of the series:
\[
G(z)=\frac{1\pm\sqrt{1-4Mz(d_0+(d_1-Md_0^2)z)}}{2Mz}
\]
The right sign is `$-$' for all $z$ since $G(0)=d_0$ and $G$ is
continuous. After some algebraic manipulations we write
\begin{equation}\label{G_esplicita}
G(z)=\frac{1-\sqrt{1-4Md_0z+4AM^2d_0^2z^2}}{2Mz}
\end{equation}
where $A=1-\alpha_{0,0}/2M$.

The radius of convergence of $G$ is the distance from the origin of
the nearest zero of the radicand. The roots of the latter are both
real and positive:
\[
z_{1,2}=\frac{1\pm\sqrt{\alpha_{0,0}/2M}}{2AMd_0}
\]
The smallest one is the convergence radius of $G$ and one can verify
that
\begin{equation}\label{eq:z1}
z_1
=\frac{1-\sqrt{\alpha_{0,0}/\alpha_{2,1}}}{(\alpha_{2,1}-\alpha_{0,0})d_0}
%\approx0.832090903
>\beta^2
\end{equation}
This completes the proof.
\end{proof}

\paragraph{On the convergence of $\hat g$, $\tilde g$ 
and the maximum of $\tilde g_B$.}

\begin{lemma}\label{thm:lem_g_tilde_b}
The radius of convergence of $\hat g$ and hence $\tilde g$ is greater
than 1. $\tilde g_B$ on the circumferences of radii 1 and $4/5$ is
bounded by:
\begin{align*}
|\tilde g_B(e^{i\theta})|&\leq0.062, \qquad \forall\theta\in[0,2\pi]\\%0.061784978\\
|\tilde g_B(4/5e^{i\theta})|&\leq0.031, \qquad \forall\theta\in[0,2\pi] %0.030446528
\end{align*}
\end{lemma}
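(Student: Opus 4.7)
The key inputs I would exploit are: the bound $|1 - \alpha_{k,i}| \leq C\beta^{2k}$ from the preceding lemma on the $\alpha_{k,i}$, the coefficientwise domination $d_k \leq D_k$ established in the proof of Lemma~\ref{thm:lem_raggio_g}, and the closed-form expression~\eqref{G_esplicita} for $G$. My plan is to convert these into a single ``master estimate'' for $\tilde g_B$.

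First, starting from $d'_{k+1} = \tfrac{1}{2}\sum_{i=0}^{k}(\alpha_{k,i} - 1) d_i d_{k-i}$ and applying the two bounds above, I would derive
\[
|d'_{k+1}| \leq \tfrac{C\beta^{2k}}{2}\,[z^k]\,G^2(z), \qquad k \geq 1,
\]
where $[z^k]$ denotes extraction of the $k$-th Taylor coefficient. Multiplying by $|z|^{k+1}$, summing over $k \geq 1$, and plugging into $\tilde g_B(z) = -2z\sum_{k\geq 2}d'_k z^k$ yields
\[
|\tilde g_B(z)| \leq C|z|^2 \bigl(G^2(\beta^2|z|) - d_0^2\bigr).
\]
The right-hand side is finite whenever $\beta^2|z|$ lies inside the disk of convergence of $G$; by~\eqref{eq:z1} this radius $z_1$ satisfies $z_1 > \beta^2$, so $\hat g$ (and hence $\tilde g = 1 - 2z\hat g$) converges on the disk $|z| < z_1/\beta^2$, which is of radius strictly greater than $1$. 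This settles the first claim.

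For the quantitative bounds, I would evaluate the right-hand side of the master estimate at $|z|=1$ and $|z|=4/5$ using~\eqref{G_esplicita}, with $C$ taken from the proof of the previous appendix lemma (essentially the maximum of $(4/3)(\beta^2 - \beta^4)$ and $\beta^7$, after adjustment for $k=0$). The main obstacle I anticipate is sharpness: the uniform constant $C$ treats every $k$ the same way, whereas the small-$k$ terms of $\hat g$ dominate, and a crude plug-in may fall short of the required $0.062$ and $0.031$. I would remedy this by splitting $\tilde g_B(z) = -2z\bigl(d'_2 z^2 + d'_3 z^3 + \dots + d'_N z^N\bigr) - 2z\sum_{k > N} d'_k z^k$, computing the leading coefficients $d'_2,\ldots,d'_N$ to high precision from~\eqref{eq:recur_d_bis}, and applying the master estimate only to the tail. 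Because each tail term carries an extra factor $\beta^{2k}$, the tail contribution is very small at $|z|=1$ and exponentially smaller at $|z|=4/5$, so combining precise leading values with the uniform tail bound should yield the two numerical estimates with room to spare.
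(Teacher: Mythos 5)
Your proposal is correct and follows essentially the same route as the paper's proof: bound $|d'_{k+1}|$ by $C\beta^{2k}$ times a convolution of the $d_i$, dominate the $d_i$ by the $D_i$ so the sum closes up into the explicit function $G$ of~\eqref{G_esplicita} evaluated at $\beta^2|z|$, read off the radius of convergence from the extra $\beta^{2k}$ factor together with $z_1>\beta^2$, and evaluate numerically on the two circles. The only differences are cosmetic: the paper converts $\sum_i d_id_{k-i}$ into $\tfrac{2}{\alpha_{1,0}}d_{k+1}\leq\tfrac{2}{\alpha_{1,0}}D_{k+1}$, so its final bound involves $G(\beta^2|z|)$ rather than your $G^2(\beta^2|z|)$, and your precaution of splitting off the first few $d'_k$ is not actually needed once you note that only $\alpha_{k,i}$ with $k\geq1$ enter $\tilde g_B$, so the smaller unadjusted constant $C=\tfrac43(\beta^2-\beta^4)$ applies and the direct plug-in already clears $0.062$ and $0.031$.
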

\begin{proof}
Let $z\in\mathbb C$,
\[
|\tilde g_B(z)|
%=|-2z\sum_{k=2}^\infty d'_kz^k|
\leq2|z|\sum_{k=2}^\infty |d'_k||z|^k
=2\sum_{k=2}^\infty |d'_k||z|^{k+1}
\]

By the estimates on $\alpha_{k,i}$, we have
$|1-\alpha_{k,i}|<C\beta^{2k}$; we also need some lower bound and, for
$k\geq1$, $\alpha_{k,i}\geq\alpha_{1,0}$. Then for all $k\geq1$:
\begin{multline*}
|d'_{k+1}|
=\left|\frac12\sum_{i=0}^{k}(\alpha_{k,i}-1)d_{i}d_{k-i}\right|
\leq\frac12C\beta^{2k}\sum_{i=0}^{k}d_{i}d_{k-i}\\
\leq\frac {C\beta^{2k}}{\alpha_{1,0}}\frac12\sum_{i=0}^{k}\alpha_{k,i}d_{i}d_{k-i}
=\frac {C\beta^{2k}}{\alpha_{1,0}}d_{k+1}
\leq\frac {C\beta^{2k}}{\alpha_{1,0}}D_{k+1}
\end{multline*}
Thanks to~\eqref{eq:z1}, this proves that the radius of convergence of
$\hat g$ is greater than 1. 

Putting together the last two upper bounds we get
\begin{multline*}
|\tilde g_B(z)|
%\leq2\sum_{k=2}^\infty |d'_k|
\leq2\sum_{k=2}^\infty \frac {C\beta^{2k-2}}{\alpha_{1,0}}D_k|z|^{k+1}
=\frac {C\beta^{-5}}{\alpha_{1,0}}\sum_{k=2}^\infty \beta^{2k}|z|^{k+1}D_k\\
%=\frac {C\beta^{-5}}{\alpha_{1,0}}G_2(\beta^2)
=\frac {C\beta^{-5}|z|}{\alpha_{1,0}}\left(G(\beta^2|z|)-D_0-D_1\beta^2|z|\right)
\end{multline*}
The value of $G(\beta^2|z|)$ can be computed thanks
to~\eqref{G_esplicita}, and the bounds one gets are those in the
lemma.
\end{proof}

\paragraph{On the minimum of $\tilde g_A$.}

\begin{lemma}\label{thm:lem_g_tilde_a}
The maximum and the minimum of $|\tilde g_A(z)|$ on the circumferences
with centre in the origin and radii 1 and $4/5$ are on the following
points:
\begin{align*}
\tilde g_A(-1)&\approx3.170 & %3.170279338
\tilde g_A(1)&\approx-0.092 \\ %-0.091986689
\tilde g_A(-4/5)&\approx2.650 & %2.649960058
\tilde g_A(4/5)&\approx0.040. %0.040147237
\end{align*}
\end{lemma}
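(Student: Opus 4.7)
The key observation is that $\tilde g_A(z)=1-2d'_0z-2d'_1z^2$ is a degree-two polynomial with real coefficients, so for $z=re^{i\theta}$ one has $|\tilde g_A(re^{i\theta})|^2=\tilde g_A(re^{i\theta})\,\tilde g_A(re^{-i\theta})$, and a direct expansion (using the identity $\cos\theta\cos 2\theta+\sin\theta\sin 2\theta=\cos\theta$) gives a polynomial of degree two in the single variable $u:=\cos\theta\in[-1,1]$, namely
\[
f(u):=4cr^2 u^2+2br(1+cr^2)u+(1-2cr^2+b^2r^2+c^2r^4),
\]
where $b:=-2d'_0$ and $c:=-2d'_1$. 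Hence the extrema of $|\tilde g_A|$ on the circle $\{|z|=r\}$ are attained either at $u=\pm 1$, which correspond exactly to $z=\pm r$, or at the unique vertex of $f$.

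The next step is to pin down the signs of $b$ and $c$. From the recurrence defining $\{d'_k\}$ one has $d'_0=d_0>0$ and $d'_1=\tfrac12(\alpha_{0,0}-1)d_0^2$; substituting $\beta=2^{-1/3}$ into the formula for $\alpha_{0,0}=(1+\beta^4-\beta^{-1})/(1-\beta^{7}-\beta^{11})$ one checks $\alpha_{0,0}\approx 0.19<1$, so $d'_1<0$, whence $b<0$ and $c>0$. The parabola $f$ therefore opens upward with vertex
\[
u^{\ast}=-\frac{b(1+cr^2)}{4cr}>0.
\]
Using the numerical values $d_0\approx 0.8156$ and $\alpha_{0,0}\approx 0.19$, a routine computation gives $u^{\ast}\approx 1.16$ for $r=1$ and $u^{\ast}\approx 1.27$ for $r=4/5$; in both cases $u^{\ast}>1$, so $f$ is strictly decreasing on $[-1,1]$. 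Consequently the maximum of $|\tilde g_A|$ on the circle is attained at $z=-r$ and the minimum at $z=r$, and the four stated numerical values follow by evaluating the real quadratic $\tilde g_A$ at $\pm 1$ and $\pm 4/5$.

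The main obstacle is purely numerical: I need to carry enough decimal digits of $d_0$ and $\alpha_{0,0}$ to certify the strict inequality $u^{\ast}>1$ on each of the two circles. The margins (roughly $0.16$ for $r=1$ and $0.27$ for $r=4/5$) are comfortable, so this amounts to routine arithmetic; once it is verified, the lemma follows immediately from the monotonicity of $f$ on $[-1,1]$ and from evaluation at the real endpoints.
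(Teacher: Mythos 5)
Your proposal is correct and follows essentially the same route as the paper: both expand $|\tilde g_A(\rho e^{i\theta})|^2$ as a real trigonometric polynomial on the circle and show that the only critical points in $\theta$ are at $\theta=0,\pi$, your condition $u^{\ast}\notin[-1,1]$ being precisely the paper's statement that the factor $c_0c_1+c_1c_2\rho^2+4c_0c_2\rho\cos\theta$ never vanishes. The only (minor) difference is that the paper certifies this by the exact algebraic bound $|c_0c_1+c_1c_2\rho^2|-|4c_0c_2\rho|>2d_0\left(1-(1-\alpha_{0,0})d_0\rho\right)^2\geq0$, valid for every $\rho>0$, whereas you verify $u^{\ast}>1$ numerically for the two radii; both are sound.
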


\begin{proof}
We study $|\tilde g_A(\rho e^{i\theta})|$. Recall that $\tilde
g_A(z)=1-2d_0'z-2d_1'z^2=\sum_{k=0}^2c_kz^k$ is a polynomial with real
coefficients.
\begin{multline*}
|\tilde g_A(\rho e^{i\theta})|^2
=\tilde g_A(\rho e^{i\theta})\overline{\tilde g_A(\rho e^{i\theta})}
=\tilde g_A(\rho e^{i\theta})\tilde g_A(\rho e^{-i\theta})
%=\sum_{j=0}^2c_je^{i\theta j}\sum_{k=0}^2c_ke^{-i\theta k}
=\sum_{j,k=0}^2c_jc_k\rho^{j+k} e^{i\theta (j-k)}\\
=\sum_{j=0}^2c_j^2\rho^{2j}
+2(c_0c_1\rho+c_1c_2\rho^3)\cos(\theta)+2c_0c_2\rho^2 \cos(2\theta)
\end{multline*}
By setting $\frac\partial{\partial\theta}|\tilde g_A(\rho e^{i\theta})|^2=0$ we find:
\begin{align*}
0&=(c_0c_1+c_1c_2\rho^2)\sin(\theta)+2c_0c_2\rho\sin(2\theta)\\
&=(c_0c_1+c_1c_2\rho^2+4c_0c_2\rho\cos(\theta))\sin(\theta)
\end{align*}
The first factor is never 0 because
$|c_0c_1+c_1c_2\rho^2|-|4c_0c_2\rho|>0$
\begin{multline*}
|c_0c_1+c_1c_2\rho^2|-|4c_0c_2\rho|
=2d'_0|1-2d_1'\rho^2|-8|d'_1|\rho\\
=2d_0(1+(1-\alpha_{0,0})d_0^2\rho^2)-4(1-\alpha_{0,0})d_0^2\rho\\
=2d_0[1+(1-\alpha_{0,0})d_0^2\rho^2-2(1-\alpha_{0,0})d_0\rho]\\
>2d_0[1+(1-\alpha_{0,0})^2d_0^2\rho^2-2(1-\alpha_{0,0})d_0\rho]\\
=2d_0(1-(1-\alpha_{0,0})d_0\rho)^2
\geq0
\end{multline*}
So the maximum and the minimum of $|\tilde g_A(z)|$ on the
circumference with centre in the origin and radius $\rho$ are on the
real points $\pm\rho$.

Setting $\rho=1$ and $\rho=4/5$ and computing values we get the
thesis.
\end{proof}

\bigskip

\paragraph{Proof of Theorem~\ref{thm:zero_g_tilde}.}

Direct computation shows that $\tilde g_A$ has only one root inside
the unit circle, in some real point in $(4/5,1)$. By Rouch\'e's
theorem and the bounds of lemmas~\ref{thm:lem_g_tilde_b} 
and~\ref{thm:lem_g_tilde_a} above, $\tilde g$ has only one root inside
the circular crown with centre in the origin and radii $4/5$ and
1. This root is a real number $R\in(4/5,1)$, since the same estimates
state that (by continuity) $\tilde g$ must be zero for some real
number in the interval:
\[
\tilde g(1)
=\tilde g_A(1)+\tilde g_B(1)
<0<\tilde g_A(4/5)+\tilde g_B(4/5)
=\tilde g(4/5).
\]

\end{document}